\documentclass[reqno]{amsart}

\usepackage{graphicx}
\usepackage{amscd}
\usepackage{amsmath, amssymb}
\usepackage{graphics}
\usepackage{graphicx}
\usepackage{epsfig}
\usepackage{xcolor}
\usepackage{soul}
\usepackage{tikz}
\usepackage{graphicx}
\usepackage{ mathrsfs }
\usepackage{ amssymb }
\usepackage{amscd}
\usepackage{amsthm }
\usepackage[utf8]{inputenc}
\usepackage{dirtytalk}
\usepackage{multirow}
\usepackage[all]{xy}

\usepackage{fancyhdr,lipsum}

\newtheorem{theorem}{Theorem}

\newtheorem{corollary}[theorem]{Corollary}

\newtheorem{definition}[theorem]{Definition}

\newtheorem{lemma}[theorem]{Lemma}

\newtheorem{proposition}[theorem]{Proposition}
\newtheorem{remark}[theorem]{Remark}

\newtheorem{question}[theorem]{Question}

\begin{document}

\title{On Universal Virtual and Welded Braid Groups and Their Linear Representations}
\author{Mohamad N. Nasser}
\address{Mohamad N. Nasser\\
         Department of Mathematics and Computer Science\\
         Beirut Arab University\\
         P.O. Box 11-5020, Beirut, Lebanon}        
\email{m.nasser@bau.edu.lb}

\author{Oscar Ocampo} 
\address{Oscar Ocampo\\  Departamento de Matem\'atica - IME\\ Universidade Federal da Bahia\\ Av.~Milton Santos~S/N, CEP:~40170-110 - Salvador - BA - Brazil}
\email{oscaro@ufba.br}

\subjclass[2020]{Primary 20F36}

\keywords{Universal virtual braid groups; Universal welded braid groups; Local representations; Irreducibility.}

\date{\today}

\maketitle

\begin{abstract}
We introduce linear representations of the universal virtual braid group $UV_n(c)$, where $n\geq 2$ and $c\geq 1$, which is a unifying framework for braid-type groups with multiple types of crossings. We classify and study its complex homogeneous $2$-local representations for all $n\geq 3$ and $c\geq 1$ (unique up to equivalence) and complex homogeneous $3$-local representations for all $n\geq 4$ and $c=2$ (four distinct families). We then introduce the universal welded braid group $UW_n(c)$ as a quotient of $UV_n(c)$ by the welded relations. This group recovers all known welded-type groups as quotients. We prove that $UW_n(c)$ has abelianization $\mathbb{Z}^c \oplus \mathbb{Z}_2$, perfect commutator subgroup for $n \geq 5$, trivial center, and $S_n$ as its smallest non-abelian finite quotient. Finally, we classify and study the complex homogeneous $2$-local representations of $UW_n(c)$ for all $n\geq 3$ and $c\geq 1$, obtaining three distinct families.
\end{abstract}

\section{Introduction}

Braid groups and their generalizations form a fundamental class of groups in geometric and combinatorial group theory. Virtual braid groups were introduced by Kauffman \cite{K} in the context of virtual knot theory and further studied by Bardakov \cite{BSV}, Singh, and Vesnin. Subsequent extensions, including virtual singular braids \cite{CPM}, virtual twin groups \cite{BSV}, and multi-virtual braid groups \cite{Kau2}, enrich the crossing structure while retaining a Coxeter-type virtual symmetry. Despite their common features, these families are typically treated as separate constructions, which suggests the existence of a universal algebraic structure from which they arise as natural quotients. \vspace{0.1cm}

In a previous paper \cite{O2}, the second author introduced the \emph{universal virtual braid group} $UV_n(c)$, defined for $n \geq 2$ strands and $c \geq 1$ types of non-virtual crossings. This group encodes only the essential interactions between virtual crossings and multiple types of non-virtual crossings, and admits canonical quotient maps onto the virtual braid group $VB_n$, the virtual singular braid group $VSG_n$, the virtual twin group $VT_n$, and the multi-virtual braid group $M_kVB_n$ \cite[Proposition~2.2]{O2}. Moreover, $UV_n(c)$ contains a finite-index right-angled Artin subgroup $KUV_n(c)$ \cite[Theorem~2.10]{O2}, which yields strong structural consequences such as residual finiteness, linearity, and the Tits alternative. \vspace{0.1cm}

A key feature of the universal virtual braid group $UV_n(c)$ is that it provides a natural framework from which several meaningful quotient groups arise by imposing forbidden relations, in analogy with the classical theory of virtual braid groups. Among these, welded-type quotients play a distinguished role, as they preserve much of the underlying combinatorial structure while introducing relations that lead to significant structural simplifications. Motivated by this perspective, we introduce the \emph{universal welded braid group} $UW_n(c)$ (Definition~\ref{def:welded}) and investigate its algebraic properties, highlighting its role as a canonical quotient within the universal setting.  \vspace{0.1cm}


The representation theory of braid-type groups has been extensively studied. A particularly fruitful approach is the study of \emph{$k$-local representations}, initiated by Mikhalchishina \cite{Mik2013}, who classified the $2$-local representations of $B_3$ and all complex homogeneous $2$-local representations of $B_n$ for $n \geq 3$. This line of research was extended by Mayassi and Nasser \cite{Mayassi2025}, who classified complex homogeneous $3$-local representations of $B_n$ for $n \geq 4$, and also classified all complex homogeneous $2$-local and $3$-local representations of the singular braid group $SB_n$. Subsequently, Keshari, Nasser, and Prabhakar \cite{nassernew} classified all complex homogeneous $2$-local representations of the multi-virtual braid group $M_kVB_n$ and the multi-welded braid group $M_kWB_n$. Nasser further provided complete classifications for the virtual braid group $VB_n$ and the welded braid group $WB_n$ \cite{Nasser2026}, as well as for the twin group $T_n$, the virtual twin group $VT_n$, and the welded twin group $WT_n$ \cite{Mayasi20251, Nasser202622}. \vspace{0.1cm}

Building on this body of work, in the present paper we classify complex homogeneous $2$-local and $3$-local representations of $UV_n(c)$ and $UW_n(c)$. Our main results are as follows.

\begin{itemize}
    \item \textbf{Classification of $2$-local representations of $UV_n(c)$:} Every nontrivial complex homogeneous $2$-local representation of $UV_n(c)$ is, up to equivalence, uniquely determined for all $n\geq 3$ and $c\geq 1$, and has the form given in Theorem~\ref{thm2local}. Moreover, we give necessary and sufficient conditions for its irreducibility (Theorem~\ref{irred2loc}).
    
    \item \textbf{Classification of $3$-local representations of $UV_n(c)$ for $c=2$:} For $n \geq 4$ and $c = 2$, there are exactly four families of nontrivial complex homogeneous $3$-local representations (Theorem~\ref{thm3local}), all of which are reducible (Theorem~\ref{thm3localred}).
    
    \item \textbf{The universal welded braid group $UW_n(c)$:} We define $UW_n(c)$ as the quotient of $UV_n(c)$ by the welded relations (Definition~\ref{def:welded}). This group unifies the welded braid group $WB_n$, the welded singular braid group $WSG_n$, the welded twin group $WT_n$, and the multi-welded braid group $M_kWB_n$ as natural quotients, see Proposition~\ref{prop:welded_quotients}.
    
    \item \textbf{Properties of $UW_n(c)$:} We compute its abelianization (Proposition~\ref{prop:uw_abel}), prove that its commutator subgroup is perfect for $n \geq 5$ (Theorem~\ref{thm:uw_perfect}), determine its center (Proposition~\ref{prop:uw_center}), and show that the symmetric group $S_n$ is the smallest non-abelian finite quotient (Theorem~\ref{thm:uw_smallest_quotient}). As a consequence, the same minimality holds for all welded-type groups listed above (Corollary~\ref{cor:welded_perfect}  and Corollary~\ref{cor:w_smallest_quotient}).

    \item \textbf{$2$-local representations of $UW_n(c)$:} We classify all complex homogeneous $2$-local representations of $UW_n(c)$ for all $n\geq 3$ and $c\geq 1$, obtaining three distinct families (Theorem~\ref{thm2localUWn}), and analyze their irreducibility (Proposition~\ref{prop:w2red}).\vspace{0.1cm}
\end{itemize}

The paper is organized as follows. Section~\ref{sec:prelim} collects the necessary preliminaries: we recall the definition of $UV_n(c)$ and its natural quotients, and we introduce the notion of $k$-local representations. In Section~\ref{sec3} we classify and study the irreducibility of all $2$-local representations of $UV_n(c)$ for all $n\geq 3$ and $c\geq 1$. Section~\ref{3local} deals with the $3$-local case for $n\geq 4$ and $c = 2$. In Section~\ref{sec:welded} we define the universal welded braid group $UW_n(c)$, show that it recovers all known welded-type groups, and establish some structural properties. Finally, Section~\ref{sec6} classifies and studies the irreducibility of the $2$-local representations of $UW_n(c)$ for all $n\geq 3$ and $c\geq 1$.

\subsection*{Acknowledgments}

The second named author gratefully acknowledges the personal and medical support of Eliane Santos, the staff of HCA, Bruno Noronha, Luciano Macedo, M\'arcio Isabella, Andreia de Oliveira Rocha, Andreia Gracielle Santana, Ednice de Souza Santos, and SMURB--UFBA (Servi\c{c}o M\'edico Universit\'ario Rubens Brasil Soares), whose support since July 2024 was essential in enabling the completion of this work. O.~O.~was partially supported by the National Council for Scientific and Technological Development (CNPq, Brazil) through a \textit{Bolsa de Produtividade} grant No.~305422/2022--7.

\vspace*{0.1 cm}

\section{Preliminaries and Previous Results}\label{sec:prelim}

In this section we collect the necessary background and fix the notation that will be used throughout the paper. We first recall the definition of the universal virtual braid group $UV_n(c)$ and describe its natural quotients, which include the virtual braid group, the virtual singular braid group, the virtual twin group, and the multi-virtual braid group. We then introduce the concept of $k$-local representations, a key tool for the classification results presented in Sections~\ref{sec3}, \ref{3local}, and \ref{sec6}, and provide a brief overview of the relevant literature.

\subsection{The universal virtual braid group and its quotients}

We begin by recalling the definition of the universal virtual braid group, which serves as the central object of this paper. This group unifies several families of virtual braid-type groups by encoding only the essential interactions between virtual crossings and multiple types of non-virtual crossings.

\begin{definition}[Universal virtual braid group \cite{O2}] 
Let $n \geq 2$ and $c \geq 1$. The universal virtual braid group $UV_n(c)$ is the group given by the presentation with:
\begin{itemize}
    \item \textbf{Generators:} $\rho_i$ for $i = 1,2,\dots,n-1$ and $\sigma_{i,t}$ for $i = 1,2,\dots,n-1$ and $t = 1,2,\dots,c$.
    \item \textbf{Relations:}
    \begin{align*}
        &\text{(PR1)}\quad \rho_i\rho_{i+1}\rho_i = \rho_{i+1}\rho_i\rho_{i+1}, \qquad i = 1,2,\dots,n-2,\\
        &\text{(PR2)}\quad \rho_i\rho_j = \rho_j\rho_i, \qquad |i-j| \geq 2,\\
        &\text{(PR3)}\quad \rho_i^2 = 1, \qquad i = 1,2,\dots,n-1,\\
        &\text{(CR)}\quad \sigma_{i,t}\sigma_{j,\ell} = \sigma_{j,\ell}\sigma_{i,t}, \qquad |i-j| \geq 2,\; 1 \leq t,\ell \leq c,\\
        &\text{(MR1)}\quad \sigma_{i,t}\rho_j = \rho_j\sigma_{i,t}, \qquad |i-j| \geq 2,\; 1 \leq t \leq c,\\
        &\text{(MR2)}\quad \rho_i\rho_{i+1}\sigma_{i,t} = \sigma_{i+1,t}\rho_i\rho_{i+1}, \qquad i = 1,2,\dots,n-2,\; 1 \leq t \leq c.
    \end{align*}
\end{itemize}
By convention, for any $c\geq 1$, the group $UV_1(c)$ is trivial.
\end{definition}

For a geometric example of a braid with 4 strands and $c=2$ types of non-virtual crossings, see \cite[Figure~1]{O2}.
For $n=2$, the presentation simplifies (see \cite[Proposition~3.1(1)]{O2}): there is only one virtual generator $\rho_1$ (with $\rho_1^2=1$) and $c$ non-virtual generators $\sigma_{1,1},\ldots,\sigma_{1,c}$ with no relations among them. Hence $UV_2(c) \cong F_c * \mathbb{Z}_2$, where $F_c$ is the free group of rank $c$. 

As shown in \cite[Proposition~2.2]{O2}, the following virtual braid-type groups arise as quotients of $UV_n(c)$:
\[ 
    \xymatrix@C=3.5pc{
UV_n(k) \ar@{->>}[d] \ar@{->>}[r] & UV_n(2) \ar@{->>}[d] \ar@{->>}[r] & UV_n(1) \ar@{->>}[d] \ar@{->>}[rd] & \\
M_kVB_n    & VSG_n \ar@{->>}[r] & VB_n & VT_n
}
 \]
where $VB_n$ is the virtual braid group \cite{K}, $VT_n$ is the virtual twin group \cite{BSV}, $VSG_n$ is the virtual singular braid group \cite{CPM}, and $M_kVB_n$ is the multi-virtual braid group \cite{Kau2}.

Let $S_n$ denote the symmetric group on $n$ letters, generated by the adjacent transpositions $s_i = (i\ i+1)$ for $i = 1,2,\dots,n-1$, subject to the usual Coxeter relations. In \cite[Section~2.2]{O2}, two natural surjective homomorphisms from $UV_n(c)$ onto $S_n$ were introduced:
\[
\pi_n^P\colon UV_n(c)\to S_n,
\qquad
\pi_n^K\colon UV_n(c)\to S_n,
\]
defined by
\[
\pi_n^P(\sigma_{i,t}) = s_i,\quad \pi_n^P(\rho_i) = s_i,
\qquad
\pi_n^K(\sigma_{i,t}) = 1,\quad \pi_n^K(\rho_i) = s_i,
\]
for all $i = 1,2,\dots,n-1$ and $t = 1,2,\dots,c$. Their kernels are denoted by
\[
PUV_n(c) = \ker(\pi_n^P)
\quad\text{and}\quad
KUV_n(c) = \ker(\pi_n^K).
\]
The group $PUV_n(c)$ is called the \emph{pure universal virtual braid group}. The subgroup $KUV_n(c)$ is a right-angled Artin group (RAAG); see \cite[Theorem~2.10]{O2}.

Moreover, for all $n \geq 2$ and $c \geq 1$, the maps $\pi_n^P$ and $\pi_n^K$ split via the inclusion $\iota\colon S_n \hookrightarrow UV_n(c)$ sending $s_i \mapsto \rho_i$. Consequently, $UV_n(c)$ admits the semidirect product decompositions:
\[
UV_n(c) \cong PUV_n(c) \rtimes S_n
\quad\text{and}\quad
UV_n(c) \cong KUV_n(c) \rtimes S_n.
\]


\subsection{$k$-local representations: definition and background}

In what follows, we present the notion of $k$-local representations for a finitely generated group $G$.

\vspace*{0.1cm}

\begin{definition}\cite{Nas20241}
Let $G$ be a group generated by $g_1, g_2, \ldots, g_{n-1}$. A representation $\theta: G \to \mathrm{GL}_m(\mathbb{C})$ is called \emph{$k$-local} if
\[
\theta(g_i) =
\begin{pmatrix}
I_{i-1} & 0 & 0 \\
0 & M_i & 0 \\
0 & 0 & I_{n-i-1}
\end{pmatrix}, \quad 1 \leq i \leq n-1,
\]
where $M_i \in \mathrm{GL}_k(\mathbb{C})$, $k = m - n + 2$, and $I_r$ denotes the identity matrix of size $r \times r$. The representation is said to be \emph{homogeneous} if $M_i = M_j$ for all $1\leq i,j \leq n-1$.
\end{definition}

\begin{remark}
The concept of $k$-local representations extends naturally to groups generated by $m(n-1)$ elements, partitioned into $m$ disjoint families. For simplicity, we introduce in the following the case $m=2$. Suppose that $G$ is a group generated by
$
\{g_i, h_i \mid 1 \leq i \leq n-1\}.$ 
A $k$-local representation $\theta: G \to \mathrm{GL}_m(\mathbb{C})$ is defined by
\[
\theta(g_i) =
\begin{pmatrix}
I_{i-1} & 0 & 0 \\
0 & G_i & 0 \\
0 & 0 & I_{n-i-1}
\end{pmatrix} \quad \text{ and } \quad 
\theta(h_i) =
\begin{pmatrix}
I_{i-1} & 0 & 0 \\
0 & H_i & 0 \\
0 & 0 & I_{n-i-1}
\end{pmatrix}, \quad 1 \leq i \leq n-1,
\]
where $G_i, H_i \in \mathrm{GL}_k(\mathbb{C})$ and $k = m - n + 2$. The representation $\theta$ is homogeneous if $G_i = G_j$ and $H_i = H_j$ for all $1\leq i,j \leq n-1$.
\end{remark}

We now present famous examples of homogeneous $k$-local representations of the braid group $B_n$ of different degrees $k$.

\begin{definition}\cite{Burau}\label{defBurau}
The complex specialization of Burau representation $\Psi_B: B_n \to \mathrm{GL}_n(\mathbb{C})$ is defined by
\[
\sigma_i \mapsto
\begin{pmatrix}
I_{i-1} & 0 & 0 \\
0 &
\begin{pmatrix}
1 - t & t \\
1 & 0
\end{pmatrix}
& 0 \\
0 & 0 & I_{n-i-1}
\end{pmatrix}, \quad 1 \leq i \leq n-1.
\]
\end{definition}

\begin{definition}\cite{19Bar}\label{Fdef}
The complex specialization of the  $F$-representation $\Psi_F: B_n \to \mathrm{GL}_{n+1}(\mathbb{C})$ is defined by
\[
\sigma_i \mapsto
\begin{pmatrix}
I_{i-1} & 0 & 0 \\
0 &
\begin{pmatrix}
1 & 1 & 0 \\
0 & -t & 0 \\
0 & t & 1
\end{pmatrix}
& 0 \\
0 & 0 & I_{n-i-1}
\end{pmatrix}, \quad 1 \leq i \leq n-1.
\]
\end{definition}

In recent years, $k$-local representations have attracted increasing attention. Mikhalchishina initiated this line of research by classifying all $2$-local representations of $B_3$ and all complex homogeneous $2$-local representations of $B_n$ for $n \geq 3$ \cite{Mik2013}. This work was later extended by Mayassi and Nasser, who studied complex homogeneous $3$-local representations of $B_n$ for $n \geq 4$; in the same work, they also classified all complex homogeneous $2$-local and $3$-local representations of the singular braid group $SB_n$ for $n \geq 2$ and $n \geq 4$, respectively \cite{Mayassi2025}. Subsequently, V.~Keshare et al. classified all complex homogeneous $2$-local representations of the multi-virtual braid group $M_nVB_n$ and the multi-welded braid group $M_kWB_n$ for $n \geq 3$ and $k \geq 1$ \cite{nassernew}. Moreover, Nasser obtained a complete classification of complex homogeneous $2$-local and $3$-local representations of the virtual braid group $VB_n$ and the welded braid group $WB_n$ for $n \geq 2$ and $n \geq 4$, respectively \cite{Nasser2026}.In a different direction, Mayassi and Nasser determined all homogeneous $2$-local representations of the twin group $T_n$ over $\mathbb{Z}[t^{\pm1}]$ \cite{Mayasi20251}. More recently, Nasser classified all complex homogeneous $3$-local representations of the twin group $T_n$, the virtual twin group $VT_n$, and the welded twin group $WT_n$ for $n \geq 4$ \cite{Nasser202622}.

\section{Construction and Irreducibility of Complex Homogeneous $2$-Local Representations of $UV_n(c)$} \label{sec3}

In this section, we classify and study all complex homogeneous $2$-local representations of the group $UV_n(c)$ for all $n \geq 3$ and $c \geq 1$. These values of $n$ and $c$ will be assumed throughout this section.

\begin{theorem} \label{thm2local}
Let $\upsilon: UV_n(c) \longrightarrow \mathrm{GL}_n(\mathbb{C})$ be a nontrivial homogeneous $2$-local representation. Then, up to equivalence of representations, $\upsilon$ is uniquely determined as follows.
\[
\upsilon(\rho_i)=
\begin{pmatrix}
I_{i-1} & 0 & 0 \\
0 & \begin{pmatrix} 0 & r_2 \\ \dfrac{1}{r_2} & 0 \end{pmatrix} & 0 \\
0 & 0 & I_{n-i-1}
\end{pmatrix}
\text{ \ and \ }
\upsilon(\sigma_{i,t})=
\begin{pmatrix}
I_{i-1} & 0 & 0 \\
0 & \begin{pmatrix} s_{1,t} & s_{2,t} \\ 
s_{3,t} & s_{4,t} \end{pmatrix} & 0 \\
0 & 0 & I_{n-i-1}
\end{pmatrix}
\]
for all $1 \leq i \leq n-1$ and $1 \leq t \leq c$, where $r_2, s_{1,t}, s_{2,t}, s_{3,t}, s_{4,t} \in \mathbb{C}$ satisfy
\[
r_2\neq 0 \quad \text{and} \quad s_{1,t} s_{4,t} - s_{2,t} s_{3,t} \neq 0.
\]
\end{theorem}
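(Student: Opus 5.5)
The plan is to reduce everything to finitely many polynomial equations in the entries of two $2\times 2$ blocks and then solve them by cases. Write
\[
\upsilon(\rho_i)\ \text{with block}\ R=\begin{pmatrix} r_1 & r_2\\ r_3 & r_4\end{pmatrix},
\qquad
\upsilon(\sigma_{i,t})\ \text{with block}\ S_t=\begin{pmatrix} s_{1,t} & s_{2,t}\\ s_{3,t} & s_{4,t}\end{pmatrix},
\]
homogeneity meaning that these blocks do not depend on $i$.

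\textbf{Step 1: discard the relations that hold automatically.} For $|i-j|\ge 2$ the nontrivial blocks of the images of the $i$-th and $j$-th generators sit on disjoint pairs of coordinates $\{i,i+1\}$ and $\{j,j+1\}$. Hence (PR2), (CR) and (MR1) hold for any choice of $R$ and the $S_t$. The only genuine constraints are (PR3), (PR1) and (MR2). The last two can be checked on the $3\times 3$ blocks supported on coordinates $i,i+1,i+2$, so it suffices to work with $n=3$ and the matrices $R\oplus 1$ and $1\oplus R$ (and similarly for $S_t$).

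\textbf{Step 2: determine $R$ from (PR3) and (PR1).} From $R^2=I$ we get three possibilities: $R=I$, $R=-I$, or $\operatorname{tr}R=0$ and $\det R=-1$, that is, $r_4=-r_1$ and $r_1^2+r_2r_3=1$.
\begin{itemize}
\item $R=-I$ violates the braid relation: the products $\rho_1\rho_2\rho_1$ and $\rho_2\rho_1\rho_2$ give $\operatorname{diag}(1,-1,-1)\neq\operatorname{diag}(-1,-1,1)$.
\item In the third case, write out $(R\oplus1)(1\oplus R)(R\oplus1)=(1\oplus R)(R\oplus1)(1\oplus R)$ entrywise. I expect the corner entries to force $r_1=0$, hence $r_2r_3=1$ and $R=\begin{pmatrix}0&r_2\\ 1/r_2&0\end{pmatrix}$.
\item One then checks directly that this $R$ satisfies (PR1).
\end{itemize}

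\textbf{Step 3: use (MR2) to constrain $S_t$.}
\begin{itemize}
\item If $R=I$, then (MR2) reads $S_t\oplus 1=1\oplus S_t$. Comparing entries forces $S_t=I$, so $\upsilon$ is trivial. This is excluded by hypothesis.
\item If $R=\begin{pmatrix}0&r_2\\ 1/r_2&0\end{pmatrix}$, compute $P=(R\oplus1)(1\oplus R)$. It is a monomial matrix: a $3$-cycle permutation matrix twisted by a diagonal matrix whose scalings are built from $r_2$ and $1/r_2$. Conjugation by $P$ moves coordinates $\{1,2\}$ to $\{2,3\}$.
\item The key check is that the diagonal scalars cancel on the block. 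Concretely, $P(S_t\oplus1)P^{-1}=1\oplus S_t$ should hold for every invertible $S_t$. This amounts to verifying that the factors multiplying $s_{2,t}$ and $s_{3,t}$ are $r_2\cdot r_2^{-1}=1$.
\end{itemize}
So (MR2) imposes no condition on $S_t$ beyond invertibility, i.e.\ $\det S_t\neq0$.

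\textbf{Main obstacle and normalization.} I expect the main effort to be the case analysis in Step 2, especially the claim that $r_1\neq0$ is incompatible with (PR1). I would attack it by comparing the $(1,3)$ and $(3,1)$ entries of both sides of (PR1), which should give $r_1 r_2^2=0$-type equations and hence $r_1=0$ (using $r_2r_3=1-r_1^2$ to rule out $r_2=0$ or $r_3=0$ separately). For the ``up to equivalence'' clause, the reduction to the single displayed family comes from the case analysis itself. Note that conjugating by $\operatorname{diag}(1,\lambda,\lambda^2,\dots)$ would rescale $r_2$, so the free parameter is kept as stated.
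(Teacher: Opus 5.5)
Your overall strategy is sound and is essentially the paper's argument reorganized. The paper writes out the equations coming from $\rho_1^2=1$, $\rho_1\rho_2\rho_1=\rho_2\rho_1\rho_2$ and $\rho_1\rho_2\sigma_{1,t}=\sigma_{2,t}\rho_1\rho_2$ and branches on one of them, while you first split by the minimal polynomial of $R$. Steps 1 and 3 and the exclusion of $R=-I$ are correct as stated.

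However, the step you single out as the main obstacle would fail as proposed, because the corner entries of (PR1) carry no information. With $A=R\oplus 1$ and $B=1\oplus R$, both $ABA$ and $BAB$ have $(1,3)$-entry $r_2^2$ and $(3,1)$-entry $r_3^2$, identically in the $r_k$. So no equation of the form $r_1r_2^2=0$ arises there. Your fallback of using $r_2r_3=1-r_1^2$ to rule out $r_2=0$ or $r_3=0$ cannot work either. The third case genuinely contains matrices with $r_2r_3=0$, for instance $R=\begin{pmatrix}1&x\\0&-1\end{pmatrix}$, which satisfies $R^2=I$, $\mathrm{tr}\,R=0$ and $\det R=-1$. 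Such matrices must be excluded by (PR1) itself.

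The entry that does the job is the middle one. Comparing $(2,2)$-entries gives $r_2r_3+r_1r_4^2=r_2r_3+r_1^2r_4$, that is, $r_1r_4(r_1-r_4)=0$. This is exactly the paper's Equation \eqref{2loc4}, on which its case analysis branches. Under $r_4=-r_1$ it reads $2r_1^3=0$, so $r_1=r_4=0$ and $r_2r_3=1$; this also disposes of the example above. The other nontrivial comparisons are $r_1(r_1+r_2r_3-1)=0$, $r_1r_2r_4=0$, $r_1r_3r_4=0$ and $r_4(1-r_2r_3-r_4)=0$, which are the paper's Equations \eqref{2loc1}--\eqref{2loc3} and \eqref{2loc5}.

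With this replacement your proof goes through. $R=I$ forces $S_t=I$ for every $t$ via (MR2), and $R=-I$ violates (PR1). For $R=\begin{pmatrix}0&r_2\\ r_2^{-1}&0\end{pmatrix}$, the monomial matrix $P$ sends $e_1\mapsto r_2^{-1}e_2$ and $e_2\mapsto r_2^{-1}e_3$ with the same factor. Hence (MR2) holds for every invertible $S_t$, as you claim.
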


\begin{proof}
Since $\upsilon$ is a complex homogeneous $2$-local representation of $UV_n(c)$, we may write
\[
\upsilon(\rho_i)=
\begin{pmatrix}
I_{i-1} & 0 & 0 \\
0 & \begin{pmatrix} r_1 & r_2 \\ 
r_3 & r_4 \end{pmatrix} & 0 \\
0 & 0 & I_{n-i-1}
\end{pmatrix} \text{ and }
\upsilon(\sigma_{i,t})=
\begin{pmatrix}
I_{i-1} & 0 & 0 \\
0 & \begin{pmatrix} s_{1,t} & s_{2,t} \\ 
s_{3,t} & s_{4,t} \end{pmatrix} & 0 \\
0 & 0 & I_{n-i-1}
\end{pmatrix}
\]
for all $1 \leq i \leq n-1$ and $1 \leq t \leq c$, where $r_1, r_2, r_3, r_4, s_{1,t}, s_{2,t}, s_{3,t}, s_{4,t} \in \mathbb{C}$ satisfy
\[
r_1 r_4 - r_2 r_3 \neq 0 \text{\ \  and\ \  } s_{1,t} s_{4,t} - s_{2,t} s_{3,t} \neq 0.
\]
Now, as $\upsilon$ is a representation of $UV_n(c)$, it must preserve the defining relations of the group. In addition, since $\upsilon$ is $2$-local, it suffices to verify only a subset of these relations, because the remaining ones automatically yield analogous conditions. In particular, it is enough to consider the relations
\[
\rho_1 \rho_2 \rho_1 = \rho_2 \rho_1 \rho_2, \qquad
\rho_1^2 = 1, \qquad
\rho_1 \rho_2 \sigma_{1,1} = \sigma_{2,1} \rho_1 \rho_2.
\]
Here we focus on a single family of the generators $\sigma_{i,t}$, namely $\sigma_{i,1}$, since for $2$-local representations the relations involving the generators $\sigma_{i,t}$ together are automatically satisfied. The same argument applies to all $\sigma_{i,t}$ with $2 \le t \le c$. Applying these relations via matrix multiplication then gives a system of equations in the entries of the matrices. After eliminating the repeated equations, we obtain a system of fifteen equations in eight unknowns, which is presented below.
\begin{equation} \label{2loc1}
r_1(1-r_1-r_2r_3)=0
\end{equation}
\begin{equation}\label{2loc2}
r_1r_2r_4=0
\end{equation}
\begin{equation}\label{2loc3}
r_1r_3r_4=0
\end{equation}
\begin{equation}\label{2loc4}
r_1r_4(r_1-r_4)=0
\end{equation}
\begin{equation}\label{2loc5}
r_4(1-r_2r_3-r_4)=0
\end{equation}
\begin{equation}\label{2loc6}
-1+r_1^2+r_2r_3=0
\end{equation}
\begin{equation}\label{2loc7}
r_2(r_1+ r_4)=0
\end{equation}
\begin{equation}\label{2loc8}
r_3(r_1+ r_4)=0
\end{equation}
\begin{equation}\label{2loc9}
-1+r_2r_3+r_4^2=0
\end{equation}
\begin{equation}\label{2loc10}
r_1(-1+s_{1,1}+r_2s_{3,1})=0
\end{equation}
\begin{equation}\label{2loc11}
r_1(-r_2 + s_{2,1} + r_2 s_{4,1})=0
\end{equation}
\begin{equation}\label{2loc12}
r_1 r_4 s_{3,1}=0
\end{equation}
\begin{equation}\label{2loc13}
r_1r_4(s_{1,1}-s_{4,1})=0
\end{equation}
\begin{equation}\label{2loc14}
r_4(r_2 - r_2 s_{1,1} - s_{2,1})=0
\end{equation}
\begin{equation}\label{2loc15}
r_4(1 - r_2 s_{3,1} - s_{4,1})=0.
\end{equation}
Now, our first goal is to show that $r_1 = r_4 = 0$ and $r_2 r_3 = 1$. From Equation \eqref{2loc4}, there are three possibilities: $r_1 = 0$, $r_4 = 0$, or $r_1 = r_4$. We analyze each case separately to establish the desired results.
\begin{enumerate}
\item If $r_1 = 0$, then Equation \eqref{2loc6} gives $r_2 r_3 = 1$, and Equation \eqref{2loc9} then implies $r_4 = 0$, yielding the desired result.
\item If $r_4 = 0$, then Equation \eqref{2loc9} gives $r_2 r_3 = 1$, and then Equation \eqref{2loc6} implies that $r_1 = 0$, which also gives the desired result.
\item Suppose that $r_1 = r_4$. We consider the following two subcases.
\begin{enumerate}
\item If $r_1 = r_4 = 0$, then Equation \eqref{2loc6} gives $r_2 r_3 = 1$, which is the desired result.
\item If $r_1 = r_4 \neq 0$, then Equations \eqref{2loc2} and \eqref{2loc3} imply $r_2 = r_3 = 0$, and so Equation \eqref{2loc1} implies that $r_1=1$ . Further, applying Equations \eqref{2loc10}, \eqref{2loc11}, \eqref{2loc12}, and \eqref{2loc13} successively yields $s_{1,1} = 1$, $s_{2,1} = 0$, $s_{3,1} = 0$, and $s_{4,1} = 1$. This case corresponds to $\upsilon$ being the trivial representation, which contradicts our assumption in the statement of the theorem.
\end{enumerate}
\end{enumerate}
Hence, in all cases above we obtain that $r_1 = r_4 = 0$ and $r_2 r_3 = 1$. Substituting these values into Equations \eqref{2loc1}–\eqref{2loc15} shows that all equations are satisfied without imposing any restrictions on $s_{i,1}$ for $1 \le i \le 4$, and this completes the proof.
\end{proof}

We now examine the irreducibility of all complex homogeneous $2$-local representations of $UV_n(c)$ for every $n \geq 3$ and $c \geq 1$. To this end, we first establish two auxiliary lemmas that pave the way toward the main result. The first lemma is concerned with constructing a representation equivalent to $\upsilon$, while the second investigates the possible invariant subspaces under $2$-local representations of $UV_n(c)$.

\begin{lemma} \label{keylemma1}
The representation $\upsilon$ introduced in Theorem \ref{thm2local} is equivalent to a representation $\upsilon'$, which is defined as follows.
\[
\upsilon'(\rho_i)=
\begin{pmatrix}
I_{i-1} & 0 & 0 \\
0 & \begin{pmatrix} 0 & 1 \\ 1 & 0 \end{pmatrix} & 0 \\
0 & 0 & I_{n-i-1}
\end{pmatrix}
\text{ \ and \ }
\upsilon'(\sigma_{i,t})=
\begin{pmatrix}
I_{i-1} & 0 & 0 \\
0 & \begin{pmatrix} s_{1,t} & s_{2,t} \\ 
s_{3,t} & s_{4,t} \end{pmatrix} & 0 \\
0 & 0 & I_{n-i-1}
\end{pmatrix}
\]
for all $1 \leq i \leq n-1$ and $1 \leq t \leq c$, where $ s_{1,t}, s_{2,t}, s_{3,t}, s_{4,t} \in \mathbb{C}$ satisfy
\[
s_{1,t} s_{4,t} - s_{2,t} s_{3,t} \neq 0.
\]
\end{lemma}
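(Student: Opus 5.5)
The plan is to conjugate $\upsilon$ by a diagonal matrix chosen so that the off-diagonal entries of the $\rho$-block are normalized to $1$, while the $\sigma$-blocks stay of the same $2$-local homogeneous shape. Concretely, let $r_2\neq 0$ be as in Theorem~\ref{thm2local}, and set
\[
D=\mathrm{diag}\bigl(1, r_2^{-1}, r_2^{-2},\dots, r_2^{-(n-1)}\bigr)\in \mathrm{GL}_n(\mathbb{C}).
\]
Define $\upsilon'(g)=D^{-1}\upsilon(g)D$ for all $g\in UV_n(c)$. Since conjugation by a fixed invertible matrix is an equivalence of representations, it only remains to compute the images of the generators.

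Write $D=\mathrm{diag}(d_1,\dots,d_n)$ with $d_j=r_2^{-(j-1)}$. For any matrix $A$ that is the identity outside the $2\times 2$ block in rows and columns $i,i+1$, the conjugate $D^{-1}AD$ has the same shape. Its block entries are multiplied as follows: the diagonal entries are unchanged, the $(i,i+1)$ entry is multiplied by $d_{i+1}/d_i=r_2^{-1}$, and the $(i+1,i)$ entry is multiplied by $d_i/d_{i+1}=r_2$. The identity parts are unaffected because diagonal matrices commute with diagonal blocks.

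Applying this to the generators:
\begin{itemize}
\item The block of $\upsilon(\rho_i)$ becomes $\begin{pmatrix}0&1\\1&0\end{pmatrix}$.
\item The block of $\upsilon(\sigma_{i,t})$ becomes $\begin{pmatrix}s_{1,t}& s_{2,t}/r_2\\ r_2 s_{3,t}& s_{4,t}\end{pmatrix}$.
\end{itemize}
The ratio $d_{i+1}/d_i$ does not depend on $i$, so these blocks are the same for every $i$. Hence $\upsilon'$ is again homogeneous and $2$-local.

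Renaming $s_{2,t}/r_2$ as $s_{2,t}$ and $r_2s_{3,t}$ as $s_{3,t}$ gives exactly the form stated in the lemma. The determinant of the new $\sigma$-block is $s_{1,t}s_{4,t}-s_{2,t}s_{3,t}$, which is unchanged and therefore still nonzero. Since the parameters $s_{j,t}$ in Theorem~\ref{thm2local} were arbitrary subject only to this condition, the renamed parameters range over the same set.

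The only real subtlety is this reparametrization. The lemma reuses the symbols $s_{j,t}$, so one must note that they now denote the rescaled values. This is harmless because both families are parametrized by arbitrary complex numbers with nonzero determinant.
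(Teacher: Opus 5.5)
Your proposal is correct and follows the paper's proof: conjugate by $\mathrm{diag}(1, r_2^{-1}, \ldots, r_2^{-(n-1)})$ to normalize the $\rho$-blocks, then observe that the $\sigma$-blocks keep the same homogeneous $2$-local shape and nonzero determinant. Your version is a bit more explicit than the paper's, since you give the rescaled entries $s_{2,t}/r_2$ and $r_2 s_{3,t}$ (the same ones the paper later uses in Lemma~\ref{lemmmmmma}) and you justify the renaming of the parameters.
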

\begin{proof}
Consider the diagonal invertible matrix
\[
Q = \mathrm{diag}\left(1, r_2^{-1}, r_2^{-2}, \ldots, r_2^{-n+1}\right).
\]
A straightforward computation shows that
\[
\upsilon'(\rho_i) = Q^{-1} \upsilon(\rho_i) Q
\]
for all \(1 \leq i \leq n-1\).
Moreover, since the entries \(s_{1,t}, s_{2,t}, s_{3,t},\) and \(s_{4,t}\) are arbitrary for all $1\leq t \leq c$, it follows that the matrices \(Q^{-1} \upsilon(\sigma_{i,t}) Q\) retain the same form. For simplicity, we continue to denote their entries by \(s_{1,t}, s_{2,t}, s_{3,t},\) and \(s_{4,t}\), and this completes the proof.
\end{proof}

\begin{lemma} \label{keylemma2}
For each $1 \leq i \leq n-1$, let $A_i$ be the matrix
\[
A_i=
\begin{pmatrix}
I_{i-1} & 0 & 0 \\
0 & \begin{pmatrix} 0 & 1 \\ 1 & 0 \end{pmatrix} & 0 \\
0 & 0 & I_{n-i-1}
\end{pmatrix}.
\]
Let $\{e_1,e_2,\ldots,e_n\}$ denote the standard basis of $\mathbb{C}^n$ and let $U \subset \mathbb{C}^n$ be a nonzero proper subspace invariant under all the matrices $A_i$, that is,
\[
A_i u \in U  \text{ for all } u \in U \text{ and } 1 \leq i \leq n-1.
\]
Then either $U = \langle (1,1,\ldots,1)^T \rangle$ or $e_i - e_{i+1} \in U$ for all $1 \leq i \leq n-1$.
\end{lemma}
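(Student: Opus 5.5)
The $A_i$ are the permutation matrices of the adjacent transpositions $s_i=(i\ i+1)$ acting on $\mathbb{C}^n$ by permuting coordinates. An $A_i$-invariant subspace is therefore a subrepresentation of the standard permutation representation of $S_n$. The plan is a direct elementary argument: take any nonzero $u\in U$ and use the identity
\[
u - A_i u = (u_i - u_{i+1})(e_i - e_{i+1}),
\]
which holds because $A_i$ only swaps the $i$-th and $(i+1)$-th coordinates. Since $U$ is invariant, $u - A_iu \in U$.

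\textbf{Case 1: every vector of $U$ has all coordinates equal.} Then $U\subseteq\langle(1,\dots,1)^T\rangle$. Since $U\neq 0$, we get $U=\langle(1,1,\dots,1)^T\rangle$.

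\textbf{Case 2: some $u\in U$ has $u_j\neq u_{j+1}$ for some $j$.} The identity above gives $e_j-e_{j+1}\in U$. We then propagate along the chain using invariance:
\[
A_{j+1}(e_j-e_{j+1}) = e_j - e_{j+2}, \qquad (e_j-e_{j+1})-(e_j-e_{j+2}) = e_{j+2}-e_{j+1},
\]
so $e_{j+1}-e_{j+2}\in U$. Similarly,
\[
A_{j-1}(e_j-e_{j+1}) = e_{j-1}-e_{j+1},
\]
and subtracting $e_j-e_{j+1}$ yields $e_{j-1}-e_j\in U$. By induction in both directions, $e_i-e_{i+1}\in U$ for all $1\le i\le n-1$.

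If some coordinates of $u$ differ, then some adjacent pair differs, so the two cases are exhaustive. Properness of $U$ is not even needed, since it only excludes $U=\mathbb{C}^n$, which falls into the second alternative anyway.

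No step is a real obstacle. The only point to check carefully is the index bookkeeping in the propagation step, namely that $A_{j\pm1}$ acts on $e_j-e_{j+1}$ as claimed. This follows since $A_{j+1}$ fixes $e_j$ and swaps $e_{j+1}, e_{j+2}$, and $A_{j-1}$ swaps $e_{j-1}, e_j$ and fixes $e_{j+1}$.
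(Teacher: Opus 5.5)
Your proof is correct and follows essentially the same route as the paper: you obtain $e_j-e_{j+1}$ from $u-A_ju$ for a vector with unequal adjacent coordinates, then propagate in both directions using $A_{j\pm 1}$. Your explicit case split (every vector of $U$ has all coordinates equal, or not) simply makes precise the paper's step of choosing such a $u$ when $U\neq\langle(1,\ldots,1)^T\rangle$.
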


\begin{proof}
Assume that $U \neq \langle (1,1,\ldots,1)^T \rangle$. We aim to prove that $e_i - e_{i+1} \in U$ for all $1 \leq i \leq n-1$. Take a nonzero vector $u = (u_1,u_2,\ldots,u_n)^T \in U$. Since $U$ is not equal to $\langle (1,1,\ldots,1)^T \rangle$, we can choose $u$ in a way that there exists an index $1 \leq i \leq n-1$ such that $u_i \neq u_{i+1}$. For this index, we have
\[
A_i u - u = (u_{i+1} - u_i)(e_i - e_{i+1}) \in U.
\]
Since $u_i \neq u_{i+1}$, it follows that $e_i - e_{i+1} \in U$ for this particular index $i$. We now generalize this argument to be valid for all $1 \leq i \leq n-1$ as follows.
\begin{enumerate}
    \item If $i \neq n-1$, then
\[
A_{i+1}(e_i - e_{i+1}) - (e_i - e_{i+1}) = e_{i+1} - e_{i+2} \in U.
\]
\item If $i \neq 1$, then
\[
A_{i-1}(e_i - e_{i+1}) - (e_i - e_{i+1}) = e_{i-1} - e_i \in U.
\]
\end{enumerate}

Repeating this argument inductively yields
$
e_i - e_{i+1} \in U  \text{ for all } 1 \leq i \leq n-1,$ and the proof is completed.
\end{proof}

We now present our result concerning the irreducibility of all complex homogeneous $2$-local representations of $UV_n(c)$ for every $n \geq 3$ and $c \geq 1$. We emphasize that our analysis is carried out for the representation $\upsilon'$ introduced in Lemma \ref{keylemma1}, since it is equivalent to the representation $\upsilon$ described in Theorem \ref{thm2local}.
\begin{theorem} \label{irred2loc}
The representation $\upsilon'$ is reducible if and only if either
\[
s_{1,t}+s_{2,t}=1 \quad \text{and} \quad s_{3,t}+s_{4,t}=1 \quad \text{for all} \quad 1\leq t\leq c
\]
or
\[
s_{1,t}+s_{3,t}=1 \quad \text{and} \quad s_{2,t}+s_{4,t}=1 \quad \text{for all} \quad 1\leq t\leq c.
\]
\end{theorem}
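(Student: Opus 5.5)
The plan is to reduce everything to Lemma \ref{keylemma2}. Any $\upsilon'$-invariant subspace $U$ is in particular invariant under the matrices $A_i=\upsilon'(\rho_i)$. So a nonzero proper invariant $U$ is either the line $\langle (1,\ldots,1)^T\rangle$, or it contains all $e_i-e_{i+1}$. In the second case $U$ contains the hyperplane $W=\{x : x_1+\cdots+x_n=0\}$, and since $U$ is proper, $U=W$. Hence $\upsilon'$ is reducible if and only if one of these two specific subspaces is invariant under every $\upsilon'(\sigma_{i,t})$. Both are automatically invariant under the $\rho_i$, because the $A_i$ are permutation matrices.

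For the line, let $\mathbf{1}=(1,\ldots,1)^T$. Since $\upsilon'(\sigma_{i,t})$ acts as the identity outside coordinates $i,i+1$, we compute
\[
\upsilon'(\sigma_{i,t})\mathbf{1}=\mathbf{1}+(s_{1,t}+s_{2,t}-1)e_i+(s_{3,t}+s_{4,t}-1)e_{i+1}.
\]
For this vector to be a multiple of $\mathbf{1}$ with $n\geq 3$, some coordinate outside $\{i,i+1\}$ equals $1$, so the multiple must be $1$. This forces $s_{1,t}+s_{2,t}=1$ and $s_{3,t}+s_{4,t}=1$ for every $t$, which is the first condition. Conversely, these equalities give $\upsilon'(\sigma_{i,t})\mathbf{1}=\mathbf{1}$.

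For the hyperplane, $W=\ker \phi$ with $\phi(x)=\mathbf{1}^T x$. The subspace $W$ is invariant under a matrix $M$ exactly when $\mathbf{1}^T M=\lambda\mathbf{1}^T$ for some $\lambda$, i.e.\ when $\mathbf{1}$ is an eigenvector of $M^T$. We compute
\[
\mathbf{1}^T\upsilon'(\sigma_{i,t})=\mathbf{1}^T+(s_{1,t}+s_{3,t}-1)e_i^T+(s_{2,t}+s_{4,t}-1)e_{i+1}^T,
\]
and the same argument (a coordinate outside $\{i,i+1\}$ is fixed at $1$ since $n\geq3$) gives $\lambda=1$, hence $s_{1,t}+s_{3,t}=1$ and $s_{2,t}+s_{4,t}=1$ for all $t$. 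This is the second condition, and the converse is again immediate.

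Combining the two cases proves the equivalence. The only delicate point is the reduction step: we must check that Lemma \ref{keylemma2} applies to arbitrary invariant subspaces, and note that containing all $e_i-e_{i+1}$ together with properness pins down $U=W$ exactly, since the span of these vectors is already $(n-1)$-dimensional. We also use $n\geq 3$ to fix the eigenvalue; this is what makes the conditions hold simultaneously for all $t$, rather than merely up to a scalar.
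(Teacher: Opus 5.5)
Your proof is correct and follows the same route as the paper: Lemma~\ref{keylemma2} forces any nonzero proper invariant subspace to be either the line $\langle \mathbf{1}\rangle$ or a subspace containing all $e_i-e_{i+1}$, and invariance under the $\upsilon'(\sigma_{i,t})$ then yields exactly the two stated conditions. There are two differences, both improvements in presentation. In the second case the paper computes the images of $e_1-e_2$ under $\upsilon'(\sigma_{1,t})$ and $\upsilon'(\sigma_{2,t})$ and uses $e_2\notin U$; you instead identify $U=W$ and use that $W$ is invariant iff $\mathbf{1}^T$ is a left eigenvector, which gives necessity and sufficiency together. You also state explicitly that $n\geq 3$ forces the eigenvalue to be $1$, a point the paper passes over in its first case.
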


\begin{proof}
For the necessary condition, assume that $\upsilon'$ is reducible. Then there exists a nontrivial proper subspace $U$ of $\mathbb{C}^n$ that is invariant under $\upsilon'$. Hence, $U$ is invariant under $\upsilon'(\rho_i)$ for all $1\leq i \leq n-1$. So, by Lemma \ref{keylemma2}, we have either $
U = \langle (1,1,\ldots,1)^T \rangle$
or
$
e_i - e_{i+1} \in U\text{ for all } 1 \leq i \leq n-1.
$
We treat these two cases separately.
\begin{enumerate}
    \item If $U = \langle (1,1,\ldots,1)^T \rangle$, then the invariance of $U$ under $\upsilon'(\sigma_{i,t})$ for all $1 \leq i \leq n-1$ and $1 \leq t \leq c$ implies that
    \[
    s_{1,t}+s_{2,t}=1 \quad \text{and} \quad s_{3,t}+s_{4,t}=1.
    \]
    \item Assume now that $e_i - e_{i+1} \in U$ for every $1 \leq i \leq n-1$. Then
    \[
    \upsilon'(\sigma_{1,t})(e_1-e_2) - (s_{1,t}-s_{2,t})(e_1-e_2)
    = (s_{1,t}-s_{2,t}+s_{3,t}-s_{4,t})e_2 \in U.
    \]
    Remark that $e_2 \notin U$, since otherwise we inductively get all $e_i \in U$ as $e_i - e_{i+1} \in U$ for every $1 \leq i \leq n-1$, contradicting the fact that $U$ is proper. Hence, we deduce that
    \begin{equation} \label{eqall}
    s_{1,t}-s_{2,t}+s_{3,t}-s_{4,t}=0.
    \end{equation}
On the other hand, we have
\[
    \upsilon'(\sigma_{2,t})(e_1-e_2) - (e_1-e_2) - s_{3,t}(e_2-e_3)
    = (-s_{1,t}-s_{3,t}+1)e_2 \in U.
    \]
    Again, using $e_2 \notin U$, we obtain $s_{1,t}+s_{3,t}=1$. Combining this with \eqref{eqall}, we conclude that $s_{2,t}+s_{4,t}=1$, and this is valid for all $1\leq t \leq c$.
\end{enumerate}
These results establish the necessary condition.\\
\noindent Now, for the sufficient condition, we have the following two cases.
\begin{enumerate}
    \item If $s_{1,t}+s_{2,t}=1$ and $s_{3,t}+s_{4,t}=1$ for all $1\leq t \leq c$, then the column vector $(1,1,\ldots,1)^T$ is invariant under $\upsilon'(\rho_i)$ and $\upsilon'(\sigma_{i,t})$ for all $1 \leq i \leq n-1$ and $1 \leq t \leq c$. Hence, the representation $\upsilon'$ is reducible.
    \item If $s_{1,t}+s_{3,t}=1$ and $s_{2,t}+s_{4,t}=1$ for all $1\leq t \leq c$, then the row vector $(1,1,\ldots,1)$ (acting from the left) is invariant under $\upsilon'(\rho_i)$ and $\upsilon'(\sigma_{i,t})$ for all $1 \leq i \leq n-1$ and $1 \leq t \leq c$. Hence, the representation $\upsilon'$ is also reducible.
\end{enumerate}
In both cases we get that the representation $\upsilon'$ is reducible and this completes the proof.
\end{proof}

\section{Construction and Irreducibility of Complex Homogeneous $3$-Local Representations of $UV_n(c)$} \label{3local}

The case of $3$-local representations is more involved than that of $2$-local representations, as it requires considering a greater number of relations in the group $UV_n(c)$. In this section, we classify and analyze all complex homogeneous $3$-local representations of $UV_n(c)$ for $n \geq 4$ and $c=2$. These assumptions on $n$ and $c$ will be adopted throughout the section. We remark that a similar analysis can be carried out for $c>2$, where additional representations are expected to arise.

\begin{theorem} \label{thm3local}
Let $\varepsilon: UV_n(2) \longrightarrow \mathrm{GL}_{n+1}(\mathbb{C})$ be a nontrivial homogeneous $3$-local representation. Then, up to equivalence of representations, $\varepsilon$ is equivalent to one of the following four representations, denoted by $\varepsilon_j$ for $1 \leq j \leq 4$, given by
\[
\varepsilon_j(\rho_i)=
\begin{pmatrix}
I_{i-1} & 0 & 0 \\
0 & R^{(j)} & 0 \\
0 & 0 & I_{n-i-1}
\end{pmatrix}
\]
\[
\varepsilon_j(\sigma_{i,1})=
\begin{pmatrix}
I_{i-1} & 0 & 0 \\
0 & S_{1}^{(j)} & 0 \\
0 & 0 & I_{n-i-1}
\end{pmatrix}, 
\]
and
\[
\varepsilon_j(\sigma_{i,2})=
\begin{pmatrix}
I_{i-1} & 0 & 0 \\
0 & S_{2}^{(j)} & 0 \\
0 & 0 & I_{n-i-1}
\end{pmatrix}
\]
for all $1 \leq i \leq n-1$, where $R^{(j)}$, $S_{1}^{(j)}$, and $S_{2}^{(j)}$ are explicitly determined as follows.
\begin{itemize}
    \item[(1)] The matrices of the representation $\varepsilon_1$ are given by
    $$R^{(1)}=\begin{pmatrix}
        1 & 0 & 0\\
        0 & 0 & r_6\\
        0 & \dfrac{1}{r_6} & 0
    \end{pmatrix},$$
    $$S_{1}^{(1)}=\begin{pmatrix}
        1 & 0 & 0\\
        0 & s_{5,1} & s_{6,1}\\
        0 & s_{8,1} & s_{9,1}
    \end{pmatrix},$$ 
    and
    $$S_{2}^{(1)}=\begin{pmatrix}
        1 & 0 & 0\\
        0 & s_{5,2} & s_{6,2}\\
        0 & s_{8,2} & s_{9,2}
    \end{pmatrix},
    $$
    where $r_6,  s_{5,1},  s_{6,1},  s_{8,1},  s_{9,1},  s_{5,2},  s_{6,2},  s_{8,2},  s_{9,2} \in \mathbb{C}$ satisfy $r_6\neq 0,  s_{5,1} s_{9,1}- s_{6,1} s_{8,1}\neq 0$, and $s_{5,2} s_{9,2}- s_{6,2} s_{8,2}\neq 0$.\vspace{0.1cm}
     \item[(2)] The matrices of the representation $\varepsilon_2$ are given by
    $$
    R^{(2)}=\begin{pmatrix}
        0 & r_2 & 0\\
        \dfrac{1}{r_2} & 0 & 0\\
        0 & 0 & 1
    \end{pmatrix},$$ 
    $$S_{1}^{(2)}=\begin{pmatrix}
        s_{1,1} & s_{2,1} & 0\\
        s_{4,1} & s_{5,1} & 0\\
        0 & 0 & 1
    \end{pmatrix},$$ 
    and
    $$S_{2}^{(2)}=\begin{pmatrix}
      s_{1,2} & s_{2,2} & 0\\
        s_{4,2} & s_{5,2} & 0\\
        0 & 0 & 1
    \end{pmatrix},
    $$
    where $r_2,  s_{1,1},  s_{2,1},  s_{4,1},  s_{5,1},  s_{1,2},  s_{2,2},  s_{4,2},  s_{5,2} \in \mathbb{C}$ satisfy $r_2\neq 0,  s_{1,1} s_{5,1}- s_{2,1} s_{4,1}\neq 0$, and $s_{1,2} s_{5,2}- s_{2,2} s_{4,2}\neq 0$.\vspace{0.1cm}
    \item[(3)] The matrices of the representation $\varepsilon_3$ are given by
    $$
    R^{(3)}=\begin{pmatrix}
        1 & 0 & 0\\
        \dfrac{1}{r_6} & -1 & r_6\\
        0 & 0 & 1
    \end{pmatrix},$$
    $$S_{1}^{(3)}=\begin{pmatrix}
        1 & 0 & 0\\
        s_{4,1} & s_{5,1} & r_6(1-r_6 s_{4,1}-s_{5,1})\\
        0 & 0 & 1
    \end{pmatrix},$$
    and
    $$S_{2}^{(3)}=\begin{pmatrix}
        1 & 0 & 0\\
        s_{4,2} & s_{5,2} & r_6(1-r_6 s_{4,2}-s_{5,2})\\
        0 & 0 & 1
    \end{pmatrix},$$
    where $r_6,  s_{4,1},  s_{5,1},  s_{4,2},  s_{5,2} \in \mathbb{C}$ satisfy $r_6\neq 0,  s_{5,1}\neq 0$, and $s_{5,2}\neq 0$.\vspace{0.1cm}

    \item[(4)] The matrices of the representation $\varepsilon_4$ are given by
    $$
    R^{(4)}=\begin{pmatrix}
        1 & r_2 & 0\\
        0 & -1 & 0\\
        0 & \dfrac{1}{r_2} & 1
    \end{pmatrix},$$
    $$S_{1}^{(4)}=\begin{pmatrix}
        1 & r_2(1- s_{5,1}-r_2 s_{8,1}) & 0\\
        0 & s_{5,1} & 0\\
        0 & s_{8,1} & 1
    \end{pmatrix},$$
    and
    $$
    S_{2}^{(4)}=\begin{pmatrix}
        1 & r_2(1- s_{5,2}-r_2 s_{8,2}) & 0\\
        0 & s_{5,2} & 0\\
        0 & s_{8,2} & 1
    \end{pmatrix},$$
    where $r_2,  s_{5,1},  s_{8,1},  s_{5,2},  s_{8,2} \in \mathbb{C}$ satisfy $r_2\neq 0,  s_{5,1}\neq 0$, and $s_{5,2}\neq 0$.
\end{itemize}
\end{theorem}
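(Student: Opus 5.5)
We mirror the strategy of Theorem~\ref{thm2local}. Write
\[
R=\begin{pmatrix} r_1&r_2&r_3\\ r_4&r_5&r_6\\ r_7&r_8&r_9\end{pmatrix},\qquad
S_t=\begin{pmatrix} s_{1,t}&s_{2,t}&s_{3,t}\\ s_{4,t}&s_{5,t}&s_{6,t}\\ s_{7,t}&s_{8,t}&s_{9,t}\end{pmatrix},\quad t=1,2,
\]
with $\det R\neq 0$ and $\det S_t\neq 0$. Because $\varepsilon$ is homogeneous and $3$-local, every defining relation of $UV_n(2)$ reduces to a relation among $\rho_1,\rho_2,\rho_3,\sigma_{1,t},\sigma_{2,t},\sigma_{3,t}$ acting on $\mathbb{C}^5$ (for $n=4$); larger $n$ adds nothing new. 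The relations to impose are:
\begin{itemize}
\item $\rho_1^2=1$, the braid relation $\rho_1\rho_2\rho_1=\rho_2\rho_1\rho_2$, and the commutation $\rho_1\rho_3=\rho_3\rho_1$ (the blocks overlap in the middle coordinate, so this is a genuine constraint, unlike the $2$-local case);
\item (CR) $\sigma_{1,t}\sigma_{3,\ell}=\sigma_{3,\ell}\sigma_{1,t}$ for $t,\ell\in\{1,2\}$, including the mixed pairs $t\neq\ell$, which couple $S_1$ and $S_2$;
\item (MR1) $\sigma_{1,t}\rho_3=\rho_3\sigma_{1,t}$ and $\sigma_{3,t}\rho_1=\rho_1\sigma_{3,t}$;
\item (MR2) $\rho_1\rho_2\sigma_{1,t}=\sigma_{2,t}\rho_1\rho_2$.
\end{itemize}

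\textbf{Step 1: classify $R$.} I would use only the relations among the $\rho_i$. Commutation of overlapping blocks is very restrictive: comparing entries of $\rho_1\rho_3=\rho_3\rho_1$ should force the off-diagonal corner entries to vanish, e.g.\ $r_3=r_7=0$ and products such as $r_2r_8$, $r_4r_6$ to vanish, leaving a small number of shapes. Combining these with $R^2=I$ and the braid relation should yield, up to conjugation by diagonal matrices and after discarding $R=I$, exactly four shapes:
\begin{itemize}
\item a transposition in the lower $2\times 2$ block, which gives $R^{(1)}$;
\item a transposition in the upper $2\times 2$ block, which gives $R^{(2)}$;
\item the two reflection-type matrices with middle entry $-1$, which give $R^{(3)}$ and $R^{(4)}$.
\end{itemize}
Other candidates, such as $-I$ or a middle entry of $-1$ with no off-diagonal term, should fail the braid relation or the commutation relation; I would check these explicitly. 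This case tree is the step I expect to be the main obstacle, since many equations are quadratic or cubic. I would first branch on $r_5$ (whether $r_5=0$, $r_5=1$ or $r_5=-1$), which should come out of $R^2=I$ together with the commutation equations, and then branch on the vanishing of $r_2,r_4,r_6,r_8$.

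\textbf{Step 2: solve for $S_t$ in each case.} With $R$ fixed, the equations from (MR1) and (MR2) are linear in the entries of $S_t$. Solving them should force $S_t$ to share the block shape of $R$:
\begin{itemize}
\item in cases (1) and (2), the fixed coordinate carries a $1$ and the entries of the complementary $2\times 2$ block are free;
\item in cases (3) and (4), only $s_{4,t},s_{5,t}$ (respectively $s_{5,t},s_{8,t}$) remain free, and the remaining entry is determined, e.g.\ $s_{6,t}=r_6(1-r_6s_{4,t}-s_{5,t})$.
\end{itemize}

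\textbf{Step 3: close the argument.} It remains to verify that the (CR) relations, including the mixed pairs $t\neq\ell$, hold automatically for these block shapes, so that they impose no further conditions. The invertibility condition $\det S_t\neq 0$ becomes $s_{5,t}\neq 0$ in cases (3) and (4). Finally, I would check that the only solution with $R=I$ is the trivial one, exactly as in case 3(b) of the proof of Theorem~\ref{thm2local}, and normalize $r_2$ or $r_6$ via diagonal conjugation as stated.
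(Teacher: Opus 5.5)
Your plan is correct and is essentially the paper's argument: write general $3\times 3$ blocks, impose the finitely many shifted relations on $\mathbb{C}^5$, and solve the resulting system. The paper hands this system to Mathematica (it does not even list MR1, which is harmless because the solutions are checked afterwards), whereas your split into ``classify $R$ from the $\rho$-relations, then solve the linear MR2 equations for $S_t$'' is a workable hand computation, since MR2 alone already determines $S_t$ in each of the four cases.

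Two corrections to Step 1. First, $\rho_1\rho_3=\rho_3\rho_1$ gives $r_3=r_7=0$, $r_2r_6=r_4r_8=0$ and $r_6(r_1-1)=r_8(r_1-1)=r_2(r_9-1)=r_4(r_9-1)=0$, not $r_2r_8=r_4r_6=0$; the latter would wrongly exclude $R^{(4)}$ and $R^{(3)}$, where these products equal $1$. Second, $r_5\in\{0,\pm1\}$ does not follow until the braid relation is used. So branch instead on which of $r_6,r_8$ vanish, which splits cleanly into $R^{(2)}$ or $I$ ($r_6=r_8=0$), $R^{(1)}$ ($r_6r_8\neq0$), $R^{(3)}$ ($r_6\neq0=r_8$) and $R^{(4)}$ ($r_8\neq0=r_6$).
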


\begin{proof}
Since $\varepsilon$ is a complex homogeneous $3$-local representation of $UV_n(2)$, we may write
\[
\varepsilon(\rho_i)=
\begin{pmatrix}
I_{i-1} & 0 & 0 \\
0 & R & 0 \\
0 & 0 & I_{n-i-1}
\end{pmatrix},
\]
\[
\varepsilon(\sigma_{i,1})=
\begin{pmatrix}
I_{i-1} & 0 & 0 \\
0 & S_{1} & 0 \\
0 & 0 & I_{n-i-1}
\end{pmatrix}, 
\]
and
\[
\varepsilon(\sigma_{i,2})=
\begin{pmatrix}
I_{i-1} & 0 & 0 \\
0 & S_{2} & 0 \\
0 & 0 & I_{n-i-1}
\end{pmatrix}
\]
for all $1 \leq i \leq n-1$, where $R$, $S_{1}$, and $S_{2}$ are explicitly presented as follows.
$$R=\begin{pmatrix}
        r_1 & r_2 & r_3\\
        r_4 & r_5 & r_6\\
        r_7 & r_8 & r_9
    \end{pmatrix},$$
    $$S_{1}=\begin{pmatrix}
        s_{1,1} & s_{2,1} & s_{3,1}\\
        s_{4,1} & s_{5,1} & s_{6,1}\\
        s_{7,1} & s_{8,1} & s_{9,1}
    \end{pmatrix},$$ 
    and
    $$S_{2}=\begin{pmatrix}
        s_{1,2} & s_{2,2} & s_{3,2}\\
        s_{4,2} & s_{5,2} & s_{6,2}\\
        s_{7,2} & s_{8,2} & s_{9,2}
    \end{pmatrix},
    $$
where all entries are in $\mathbb{C}$ and the matrices are assumed to be invertible. Now, as $\varepsilon$ is a representation of $UV_n(2)$, it must preserve the defining relations of the group. In addition, since $\varepsilon$ is $3$-local, it suffices to verify only a subset of these relations, because the remaining ones automatically yield analogous conditions. In particular, it is enough to consider the relations
\[
\rho_1 \rho_2 \rho_1 = \rho_2 \rho_1 \rho_2, \quad
\rho_1 \rho_3 = \rho_3 \rho_1, \quad
\rho_1^2 = 1,
\]
\[
\sigma_{1,1}\sigma_{3,1} = \sigma_{3,1}\sigma_{1,1}, \quad
\sigma_{1,1}\sigma_{3,2} = \sigma_{3,2}\sigma_{1,1}, \quad
\sigma_{1,2}\sigma_{3,1} = \sigma_{3,1}\sigma_{1,2}, \quad
\sigma_{1,2}\sigma_{3,2} = \sigma_{3,2}\sigma_{1,2},
\]
\[
\rho_1 \rho_2 \sigma_{1,1} = \sigma_{2,1}\rho_1 \rho_2, \quad
\rho_1 \rho_2 \sigma_{1,2} = \sigma_{2,2}\rho_1 \rho_2.
\]
Applying these relations through matrix multiplication yields a system of equations in the entries of the matrices as done in Theorem \ref{thm2local}. We remark that this system is large and rather lengthy, so we solve it using Wolfram Mathematica software to avoid complex computations, which leads to the desired result. A straightforward verification shows that the obtained matrices satisfy all the defining relations of $UV_n(2)$, and therefore the proof is complete.
\end{proof}

We now investigate the irreducibility of the representations $\varepsilon_j$ for $1 \leq j \leq 4$ introduced in Theorem \ref{thm3local}.

\begin{theorem}\label{thm3localred}
The representations $\varepsilon_j$, $1\leq j \leq 4$, are reducible.
\end{theorem}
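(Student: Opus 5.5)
All four representations can be handled by exhibiting an explicit nonzero proper invariant subspace of $\mathbb{C}^{n+1}$. No general machinery is needed, and no analogue of Lemma \ref{keylemma2} is required. The key observation is that for each $j$ the three matrices $R^{(j)}$, $S_1^{(j)}$, $S_2^{(j)}$ have the same sparsity pattern. So it suffices to locate one pattern-based invariant subspace per family and check it against a generic block matrix with that pattern. Throughout, $\{e_1,\ldots,e_{n+1}\}$ is the standard basis, and the image of a generator with index $i$ is $\mathrm{diag}(I_{i-1},M,I_{n-i-1})$ with $M$ occupying rows and columns $i,i+1,i+2$.

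\textbf{$\varepsilon_1$ and $\varepsilon_2$.} For $\varepsilon_1$, every block matrix has the form $1\oplus M'$ with $M'$ a $2\times 2$ block. Hence the image of each generator with index $i$ equals $\mathrm{diag}(I_i, M', I_{n-i-1})$, and $e_1$ is fixed by all of $\varepsilon_1(\rho_i)$, $\varepsilon_1(\sigma_{i,1})$, $\varepsilon_1(\sigma_{i,2})$. Symmetrically, for $\varepsilon_2$ the blocks have the form $M'\oplus 1$, so every generator image is $\mathrm{diag}(I_{i-1},M',I_{n-i})$ and fixes $e_{n+1}$. Thus $\langle e_1\rangle$, respectively $\langle e_{n+1}\rangle$, is a one-dimensional invariant subspace, and these representations are reducible.

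\textbf{$\varepsilon_4$.} Here each of $R^{(4)},S_1^{(4)},S_2^{(4)}$ has first and third columns equal to the standard columns $(1,0,0)^T$ and $(0,0,1)^T$. So the generator image with index $i$ fixes every $e_k$ with $k\neq i+1$, and sends $e_{i+1}$ into $\langle e_i,e_{i+1},e_{i+2}\rangle$. Since $i+1$ ranges over $2,\ldots,n$, the vector $e_1$ (and also $e_{n+1}$) is fixed by all generators. Hence $\langle e_1\rangle$ is invariant.

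\textbf{$\varepsilon_3$.} This is the transpose situation: the first and third rows of each block are standard, so only coordinate $i+1$ is changed by a generator with index $i$. I would take $W=\langle e_2,\ldots,e_n\rangle$ and check it column by column. The image of $e_{i+1}$ is a scalar multiple of $e_{i+1}$, since the middle column of each block is $(0,*,0)^T$. The image of $e_i$ is $e_i+(\ast)e_{i+1}$, which lies in $W$ when $i\geq 2$. The image of $e_{i+2}$ is $(\ast)e_{i+1}+e_{i+2}$, which lies in $W$ when $i+2\leq n$. All other $e_k$ are fixed. Thus $W$ is invariant. It is nonzero and proper (it has dimension $n-1$, with $1\le n-1<n+1$; the hypothesis $n\geq 4$ is more than enough), so $\varepsilon_3$ is reducible.

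The only step requiring care is $\varepsilon_3$, where no basis vector is obviously fixed and one must observe that the "interior" coordinate subspace is preserved. The boundary cases $i=1$ and $i=n-1$ are exactly where the excluded vectors $e_1$ and $e_{n+1}$ would appear, so they should be checked explicitly.
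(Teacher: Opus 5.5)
Your proof is correct. For $\varepsilon_1$ and $\varepsilon_2$ it is the paper's argument (the fixed vectors $e_1$ and $e_{n+1}$), but for $\varepsilon_3$ and $\varepsilon_4$ you take a different route.

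The paper exhibits, for $\varepsilon_3$, the common fixed column vector $(1,r_6^{-1},\dots,r_6^{-n})^T$, and for $\varepsilon_4$ a common fixed row vector. Both depend on the exact block entries. For $\varepsilon_3$, the vector is fixed precisely because the $(2,3)$-entry of $S_t^{(3)}$ equals $r_6(1-r_6s_{4,t}-s_{5,t})$.

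You use only where the standard rows and columns sit. In $\varepsilon_4$ a generator of index $i$ moves only $e_{i+1}$, so $e_1$ is fixed. In $\varepsilon_3$ it changes only the $(i+1)$-st coordinate, so $\langle e_2,\dots,e_n\rangle$ is invariant.

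Your route needs no computation. It shows reducibility for any blocks with these patterns, whatever the parameter constraints. It also sidesteps a slip in the paper: the row vector given for $\varepsilon_4$ is written in $r_6$, which is not a parameter of $\varepsilon_4$; a correct fixed row vector is $(1,r_2,r_2^2,\dots,r_2^n)$.

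The paper's computation for $\varepsilon_3$ gives finer information: a one-dimensional trivial subrepresentation, analogous to $(1,\dots,1)^T$ in the $2$-local case. Your $W$ is only some proper invariant subspace. Dually, your pattern observation for $\varepsilon_3$ also makes the hyperplane $\{x_1=0\}$ invariant, giving a trivial quotient.
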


\begin{proof}
We examine each representation separately.
\begin{enumerate}

\item For the representation $\varepsilon_1$, we can directly see that the column vector $(1,0,\ldots,0)^T$ is invariant under the action of the matrices $\varepsilon_1(\rho_i)$, $\varepsilon_1(\sigma_{i,1})$, and $\varepsilon_1(\sigma_{i,2})$ for all $1 \leq i \leq n-1$. Therefore, $\varepsilon_1$ is reducible.
\item For the representation $\varepsilon_2$, we can directly see that the column vector $(0,\ldots,0,1)^T$ is invariant under the action of the matrices $\varepsilon_2(\rho_i)$, $\varepsilon_2(\sigma_{i,1})$, and $\varepsilon_2(\sigma_{i,2})$ for all $1 \leq i \leq n-1$. Therefore, $\varepsilon_2$ is reducible.
\item For the representation $\varepsilon_3$, direct matrix computations show that the column vector $(1, r_6^{-1}, r_6^{-2}, \ldots, r_6^{-n})^T$ is invariant under the action of the matrices $\varepsilon_3(\rho_i)$, $\varepsilon_3(\sigma_{i,1})$, and $\varepsilon_3(\sigma_{i,2})$ for all $1 \leq i \leq n-1$. Therefore, $\varepsilon_3$ is reducible.
\item For the representation $\varepsilon_4$, a similar computation shows that the row vector $(1, r_6^{-1}, r_6^{-2}, \ldots, r_6^{-n})$ is invariant under left multiplication by the matrices $\varepsilon_4(\rho_i)$, $\varepsilon_4(\sigma_{i,1})$, and $\varepsilon_4(\sigma_{i,2})$ for all $1 \leq i \leq n-1$. Consequently, $\varepsilon_4$ is also reducible.
\end{enumerate}
\end{proof}

We end this section with the following question.

\begin{question}
Motivated by the results obtained, can the methods developed here be extended to classify and analyze complex homogeneous $3$-local representations of $UV_n(c)$ for $n \geq 4$ and $c > 2$?
\end{question}

\section{The Universal Welded Braid Group $UW_n(c)$} \label{sec:welded}

In this section we introduce the universal welded braid group $UW_n(c)$ as a natural quotient of the universal virtual braid group $UV_n(c)$ by the so-called \emph{welded relations} (or over-forbidden moves). We then show that all known welded-type braid groups appearing in the literature arise as quotients of $UW_n(c)$, extending the unifying framework established in  \cite[Proposition~2.2]{O2}.

\subsection{Forbidden relations in $UV_n(c)$ and the universal welded braid group}

The following result shows that certain relations that might appear natural are in fact \emph{forbidden} in $UV_n(c)$; they cannot be deduced from the defining relations and therefore define proper quotients.

\begin{proposition}\label{prop:forbidden}
Let $n \geq 3$ and $c \geq 1$. For every $t \in \{1,2,\dots,c\}$ and every $i = 1,2,\dots,n-2$, the following relations are \emph{not} consequences of the defining relations of $UV_n(c)$:
\begin{align}
    \rho_i \, \sigma_{i+1,t} \, \sigma_{i,t} &= \sigma_{i+1,t} \, \sigma_{i,t} \, \rho_{i+1}, \label{eq:forbidden1}\\
    \rho_{i+1} \, \sigma_{i,t} \, \sigma_{i+1,t} &= \sigma_{i,t} \, \sigma_{i+1,t} \, \rho_i. \label{eq:forbidden2}
\end{align}
In other words, imposing either of these relations yields a proper quotient of $UV_n(c)$.
\end{proposition}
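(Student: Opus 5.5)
The plan is to exhibit, for each relation, a homomorphism out of $UV_n(c)$ under which the two sides of the relation have different images. If the two sides were equal in $UV_n(c)$, their images would be equal under every homomorphism. So one such homomorphism shows that the relation is not a consequence of the defining relations. It then also shows that the element $w=(\text{left side})^{-1}(\text{right side})$ is nontrivial in $UV_n(c)$, so the quotient by the normal closure of $w$ is proper.

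The natural candidate is the surjection $\pi_n^K\colon UV_n(c)\to S_n$ recalled in Section~\ref{sec:prelim}, defined by $\pi_n^K(\sigma_{i,t})=1$ and $\pi_n^K(\rho_i)=s_i$. Its well-definedness was established in \cite{O2}, and one can also check it directly: (PR1)--(PR3) become the Coxeter relations of $S_n$, while (CR), (MR1) and (MR2) become trivial or reduce to $s_is_{i+1}=s_is_{i+1}$.

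Next, I apply $\pi_n^K$ to \eqref{eq:forbidden1}. The left side $\rho_i\sigma_{i+1,t}\sigma_{i,t}$ maps to $s_i$, and the right side $\sigma_{i+1,t}\sigma_{i,t}\rho_{i+1}$ maps to $s_{i+1}$. Likewise, for \eqref{eq:forbidden2} the two sides map to $s_{i+1}$ and $s_i$ respectively. Since $s_i=(i\ i+1)$ and $s_{i+1}=(i+1\ i+2)$ are distinct transpositions in $S_n$ for $n\geq 3$ and $1\leq i\leq n-2$, neither relation holds in $UV_n(c)$. This holds for every $t\in\{1,\dots,c\}$.

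As an independent check, I could instead use the representation $\upsilon'$ of Lemma~\ref{keylemma1} with $n=3$ and diagonal blocks $\mathrm{diag}(a,b)$. Then $\upsilon'(\sigma_{2,t}\sigma_{1,t})=\mathrm{diag}(a,ab,b)$. Multiplying on the left by the swap of rows $1,2$ gives a matrix whose nonzero entries sit off the $(1,1)$ position, whereas multiplying on the right by the swap of columns $2,3$ gives a matrix with $(1,1)$-entry $a\neq 0$. So these two matrices always differ. There is no real obstacle here. The only point needing care is that $\pi_n^K$ is a genuine homomorphism, which I take from \cite{O2}.
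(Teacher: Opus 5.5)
Your proof is correct and is essentially the paper's argument. The paper uses a map $\phi\colon UV_n(c)\to\mathbb{Z}\times S_n$ whose $S_n$-coordinate is exactly $\pi_n^K$, and it concludes from $s_i\neq s_{i+1}$ just as you do; its $\mathbb{Z}$-coordinate, which counts occurrences of the generators $\sigma_{j,t_0}$, takes the same value on both sides of each relation and so plays no role, which makes your version via $\pi_n^K$ alone a cleaner form of the same proof.
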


\begin{proof}
Fix an index $t_0 \in \{1,2,\dots,c\}$. Define a map
\[
\phi\colon UV_n(c) \longrightarrow \mathbb{Z} \times S_n
\]
as follows:
\[
\phi(\rho_i) = (0,\; s_i), \qquad s_i = (i\; i+1) \in S_n,
\]
\[
\phi(\sigma_{i,t_0}) = (1,\; \mathrm{id}_{S_n}) \quad \text{for all } i = 1,2,\dots,n-1,
\]
\[
\phi(\sigma_{i,t}) = (0,\; \mathrm{id}_{S_n}) \quad \text{for all } i \text{ and } t \neq t_0.
\]
A direct verification using the defining relations of $UV_n(c)$ shows that $\phi$ is a well-defined homomorphism. We omit the routine check.

Assume, for contradiction, that relation~\eqref{eq:forbidden1} holds in $UV_n(c)$. We apply $\phi$ and distinguish two cases.

\paragraph{Case 1: $t \neq t_0$.}
Then $\phi(\sigma_{i,t}) = \phi(\sigma_{i+1,t}) = (0,\mathrm{id})$. Hence
\[
\phi(\rho_i \sigma_{i+1,t} \sigma_{i,t}) = (0,s_i) \cdot (0,\mathrm{id}) \cdot (0,\mathrm{id}) = (0, s_i),
\]
\[
\phi(\sigma_{i+1,t} \sigma_{i,t} \rho_{i+1}) = (0,\mathrm{id}) \cdot (0,\mathrm{id}) \cdot (0,s_{i+1}) = (0, s_{i+1}).
\]
If the relation held, we would have $(0,s_i) = (0,s_{i+1})$ in $\mathbb{Z} \times S_n$, which forces $s_i = s_{i+1}$ in $S_n$. But for $n \geq 3$, the transpositions $s_i = (i\; i+1)$ and $s_{i+1} = (i+1\; i+2)$ are distinct. Contradiction.

\paragraph{Case 2: $t = t_0$.}
Now $\phi(\sigma_{i,t_0}) = \phi(\sigma_{i+1,t_0}) = (1,\mathrm{id})$. Then
\[
\phi(\rho_i \sigma_{i+1,t_0} \sigma_{i,t_0}) = (0,s_i) \cdot (1,\mathrm{id}) \cdot (1,\mathrm{id}) = (2, s_i),
\]
\[
\phi(\sigma_{i+1,t_0} \sigma_{i,t_0} \rho_{i+1}) = (1,\mathrm{id}) \cdot (1,\mathrm{id}) \cdot (0,s_{i+1}) = (2, s_{i+1}).
\]
Equality would again imply $s_i = s_{i+1}$, impossible for $n \geq 3$.

Thus relation \eqref{eq:forbidden1} cannot hold in $UV_n(c)$ for any $t$ when $n \geq 3$.

The proof for \eqref{eq:forbidden2} is completely analogous. For $t \neq t_0$:
\[
\phi(\rho_{i+1} \sigma_{i,t} \sigma_{i+1,t}) = (0, s_{i+1}), \qquad
\phi(\sigma_{i,t} \sigma_{i+1,t} \rho_i) = (0, s_i),
\]
forcing $s_i = s_{i+1}$. For $t = t_0$:
\[
\phi(\rho_{i+1} \sigma_{i,t_0} \sigma_{i+1,t_0}) = (2, s_{i+1}), \qquad
\phi(\sigma_{i,t_0} \sigma_{i+1,t_0} \rho_i) = (2, s_i),
\]
again yielding the same contradiction. Hence both relations are forbidden in $UV_n(c)$ for $n \geq 3$.
\end{proof}

The relations \eqref{eq:forbidden1} and \eqref{eq:forbidden2} are precisely the \emph{over-forbidden moves} (F1) that distinguish welded braid theory from virtual braid theory. By adding them to $UV_n(c)$ we obtain a new group.

\begin{definition}[Universal welded braid group]\label{def:welded}
Let $n \geq 2$ and $c \geq 1$. The universal welded braid group $UW_n(c)$ is the quotient of $UV_n(c)$ by the normal closure of the welded relation:
\[
\rho_i \, \sigma_{i+1,t} \, \sigma_{i,t} \; (\sigma_{i+1,t} \sigma_{i,t} \rho_{i+1})^{-1},
\]
for all $i = 1,\dots,n-2$ and all $t = 1,\dots,c$. Equivalently, $UW_n(c)$ has the same generators as $UV_n(c)$ together with the additional relation
\[
\rho_i \sigma_{i+1,t} \sigma_{i,t} = \sigma_{i+1,t} \sigma_{i,t} \rho_{i+1},
\]
for all $i = 1,2,\dots,n-2$ and $t = 1,2,\dots,c$.
\end{definition}

\begin{remark}
For $n = 2$ the index set for $i$ is empty, so $UW_2(c) = UV_2(c) \cong F_c * \mathbb{Z}_2$. For $n \geq 3$, Proposition~\ref{prop:forbidden} guarantees that the welded relations are not already present in $UV_n(c)$; thus $UW_n(c)$ is a proper quotient.
\end{remark}

\subsection{Quotients of $UW_n(c)$ recover known welded braid groups}

We now extend Proposition 2.2 of \cite{O2} to the welded setting. The following result shows that all welded-type braid groups appearing in the literature (the welded braid group, the welded singular braid group, the welded twin group, and the multi-welded braid group) arise as quotients of $UW_n(c)$ for appropriate choices of $c$ and additional relations.

\begin{proposition}\label{prop:welded_quotients}
Let $n \geq 2$. The following groups are quotients of $UW_n(c)$ for suitable $c$:

\begin{enumerate}
\item[(i)] \textbf{Welded braid group $WB_n$.}  
      For $c = 1$, under the identification $\rho_i \mapsto v_i$ and $\sigma_{i,1} \mapsto \sigma_i$, one has
      \[
      WB_n \cong UW_n(1) \big/ \big\langle\!\big\langle \sigma_{i,1}\sigma_{i+1,1}\sigma_{i,1} = \sigma_{i+1,1}\sigma_{i,1}\sigma_{i+1,1} \big\rangle\!\big\rangle.
      \]
      In other words, $WB_n$ is the quotient of $UW_n(1)$ by the braid relations for the $\sigma_{i,1}$.

\item[(ii)] \textbf{Welded singular braid group $WSG_n$.}  
      For $c = 2$, identify $\rho_i \mapsto v_i$, $\sigma_{i,1} \mapsto \sigma_i$, $\sigma_{i,2} \mapsto \tau_i$. Then
      \[
      WSG_n \cong UW_n(2) \big/ \big\langle\!\big\langle \text{braid relations for } \sigma_{i,1} \text{ and } \sigma_{i,2},\; \text{singular relations} \big\rangle\!\big\rangle,
      \]
      where the singular relations are:
      \[
      \sigma_i \tau_i = \tau_i \sigma_i,\quad
      \sigma_i \sigma_{i+1} \tau_i = \tau_{i+1} \sigma_i \sigma_{i+1},\quad
      \sigma_{i+1} \sigma_i \tau_{i+1} = \tau_i \sigma_{i+1} \sigma_i.
      \]

\item[(iii)] \textbf{Welded twin group $WT_n$.}  
      For $c = 1$, under the identification $\rho_i \mapsto \rho_i$ and $\sigma_{i,1} \mapsto \sigma_i$, one has
      \[
      WT_n \cong UW_n(1) \big/ \big\langle\!\big\langle \sigma_{i,1}^2 = 1 \big\rangle\!\big\rangle.
      \]
      Equivalently, $WT_n$ is the quotient of the virtual twin group $VT_n$ by the welded relation $\rho_i \sigma_{i+1} \sigma_i = \sigma_{i+1} \sigma_i \rho_{i+1}$ (see the diagram in \cite[Figure~1]{CO}).

\item[(iv)] \textbf{Multi-welded braid group $M_k WB_n$ ($k \geq 2$).}  
      Using the identification of Proposition 2.2(i) in \cite{O2}:
      \[
      \rho_i \longmapsto \rho_i^{(0)},\quad
      \sigma_{i,1} \longmapsto \rho_i^{(1)},\; \dots,\;
      \sigma_{i,k-1} \longmapsto \rho_i^{(k-1)},\quad
      \sigma_{i,k} \longmapsto \sigma_i,
      \]
      one obtains $M_k WB_n$ as the quotient of $UW_n(k)$ by the braid relations for each family $\sigma_{i,t}$ (with fixed $t$) and, additionally, the over-forbidden moves (which are already present in $UW_n(k)$). Consequently, $M_k WB_n$ is a quotient of $UW_n(k)$.
\end{enumerate}
\end{proposition}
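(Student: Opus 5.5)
The approach is to use von Dyck's theorem in each case. A map from a presented group to another group, defined on generators, extends to a surjective homomorphism as soon as every defining relation of the source holds in the target and the image of the generating set generates the target. Via Proposition~2.2 of \cite{O2}, we already know that $VB_n$, $VSG_n$, $VT_n$ and $M_kVB_n$ are the quotients of $UV_n(c)$ (for the appropriate $c$) by the corresponding extra relations: the braid relations, the singular relations, $\sigma_{i,1}^2=1$, and the braid relations for each family, respectively. The welded counterparts $WB_n$, $WSG_n$, $WT_n$ and $M_kWB_n$ are, by their standard presentations, obtained from these virtual groups by adding the welded (over-forbidden) relation $v_i\sigma_{i+1}\sigma_i=\sigma_{i+1}\sigma_i v_{i+1}$, in its appropriate form, for the non-virtual generators. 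The plan is therefore to compose the known surjection $UV_n(c)\twoheadrightarrow X_n$ with the quotient $X_n\twoheadrightarrow WX_n$, and to check that the welded relations of Definition~\ref{def:welded} are sent to relations that hold in $WX_n$. This yields a factorization through $UW_n(c)$ and a surjection $UW_n(c)\twoheadrightarrow WX_n$.

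The steps, in order, are as follows. First, for each of the four groups, recall a presentation of $WX_n$: it consists of the presentation of $X_n$ together with the welded relation. For $WSG_n$ and $M_kWB_n$, note which crossing families carry the over-forbidden move. Second, verify that the images of the defining relations (PR1)--(PR3), (CR), (MR1), (MR2) hold in $WX_n$; this is inherited directly from \cite[Proposition~2.2]{O2}. Third, verify that the image of $\rho_i\sigma_{i+1,t}\sigma_{i,t}=\sigma_{i+1,t}\sigma_{i,t}\rho_{i+1}$ holds for every $t$. Fourth, establish the isomorphisms claimed in (i) and (iii). In each case we have two presentations on the same generators: $UW_n(c)$ with the extra relations added, and $WX_n$. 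I would show that each set of relations is implied by the other and then apply Tietze transformations. Surjectivity is clear throughout because the generators map onto generators.

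The main obstacle is the third step in cases (ii) and (iv). Definition~\ref{def:welded} imposes the welded relation for every family $t$, including the singular family $\tau_i$ and the families $\sigma_{i,t}\mapsto\rho_i^{(t)}$ for $t<k$, which are virtual-type generators in $M_kWB_n$. For (iv), the relation $\rho_i^{(0)}\rho_{i+1}^{(t)}\rho_i^{(t)}=\rho_{i+1}^{(t)}\rho_i^{(t)}\rho_{i+1}^{(0)}$ should follow from the mixed relations among the virtual families in $M_kVB_n$, possibly together with $(\rho^{(t)}_i)^2=1$. I would try to derive it by rewriting it as $\rho_{i+1}^{(t)}\rho_i^{(t)}\rho_{i+1}^{(0)}\rho_i^{(t)}\rho_{i+1}^{(t)}=\rho_i^{(0)}$ and then invoking the mixed relation of $M_kVB_n$ for the pair of families $0,t$. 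For (ii), the relation with $\tau$ must either be part of the standard presentation of $WSG_n$ or be derivable from the singular relations combined with the welded relation for $\sigma$. If it is not derivable, the honest fix is to state (ii) with $WSG_n$ presented so that the forbidden move is imposed on $\tau$ as well, or to add that relation explicitly in the quotient. I would check this point first, against the presentation of $WSG_n$ in the literature.
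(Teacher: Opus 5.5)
Your plan is the paper's proof: compose the surjection $UV_n(c)\twoheadrightarrow X_n$ of \cite[Proposition~2.2]{O2} with $X_n\twoheadrightarrow WX_n$ and check where (WR1) goes. The paper settles your worry in (ii) by definition, since \cite[Definition~20]{O1} imposes the forbidden move on both $\sigma_i$ and $\tau_i$.

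In (iv) you are more careful than the paper, which claims (WR1) becomes ``exactly the over-forbidden moves''. For $t<k$ the image is the purely virtual relation you isolate. Your rewriting closes it using the relation $\rho_i^{(t)}\rho_{i+1}^{(0)}\rho_i^{(t)}=\rho_{i+1}^{(t)}\rho_i^{(0)}\rho_{i+1}^{(t)}$ of $M_kVB_n$. Note that this is not the image of (MR2), which has $0$ and $t$ swapped.

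For $t=k$, however, neither you nor the paper notices a mismatch. The image of (WR1) is $\rho_i^{(0)}\sigma_{i+1}\sigma_i=\sigma_{i+1}\sigma_i\rho_{i+1}^{(0)}$, which is \eqref{eq:forbidden1}. The move quoted for $M_kWB_n$, $\sigma_i\sigma_{i+1}\rho_i^{(\alpha)}=\rho_{i+1}^{(\alpha)}\sigma_i\sigma_{i+1}$, is \eqref{eq:forbidden2}. These two are swapped by the automorphism $\sigma_i\mapsto\sigma_i^{-1}$ of the virtual group, but they are not consequences of one another. So ``its appropriate form'' needs the identification $\sigma_{i,k}\mapsto\sigma_i^{-1}$, or a matching convention; under it, (WR1) becomes the inverse of the quoted move.
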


\begin{proof}
We verify each case.\vspace{0.01 cm}

\paragraph{(i) Welded braid group $WB_n$.}
The universal virtual braid group $UV_n(1)$ has generators $\rho_i$ and $\sigma_{i,1}$. The virtual braid group $VB_n$ is the quotient of $UV_n(1)$ by the braid relations $\sigma_{i,1}\sigma_{i+1,1}\sigma_{i,1} = \sigma_{i+1,1}\sigma_{i,1}\sigma_{i+1,1}$ (see Proposition~ 2.2 of \cite{O2}). The welded braid group $WB_n$ is then the quotient of $VB_n$ by the welded relations $v_i \sigma_{i+1} \sigma_i = \sigma_{i+1} \sigma_i v_{i+1}$ (see e.g. \cite{K}). Under the identification $\rho_i = v_i$ and $\sigma_{i,1} = \sigma_i$, these welded relations are exactly (WR1) in $UW_n(1)$. Therefore $WB_n$ is isomorphic to the quotient of $UW_n(1)$ by the braid relations for $\sigma_{i,1}$.\vspace{0.01 cm}

\paragraph{(ii) Welded singular braid group $WSG_n$.}
The virtual singular braid group $VSG_n$ is the quotient of $UV_n(2)$ by the braid relations for $\sigma_{i,1}$ and $\sigma_{i,2}$ (each family separately) together with the singular relations listed in \cite{O1} (see also \cite[Proposition~2.2(ii)]{O2}). The welded singular braid group $WSG_n$ is defined as the quotient of $VSG_n$ by the additional welded relations
\[
v_i \sigma_{i+1} \sigma_i = \sigma_{i+1} \sigma_i v_{i+1}, \qquad
v_i \tau_{i+1} \tau_i = \tau_{i+1} \tau_i v_{i+1},
\]
for $i = 1,2,\dots,n-2$ (see \cite[Definition~20]{O1}). Under the identification $\rho_i = v_i$, $\sigma_{i,1} = \sigma_i$, $\sigma_{i,2} = \tau_i$, these welded relations are precisely (WR1) for $t=1$ and $t=2$. Hence $WSG_n$ is isomorphic to the quotient of $UW_n(2)$ by the braid relations for $\sigma_{i,1}$ and $\sigma_{i,2}$ together with the singular relations.\vspace{0.01 cm}

\paragraph{(iii) Welded twin group $WT_n$.}
The virtual twin group $VT_n$ is the quotient of $UV_n(1)$ by the relations $\sigma_{i,1}^2 = 1$ (see Proposition~2.2(iii) of \cite{O2}). The welded twin group $WT_n$ is defined in the literature (see the diagram in \cite[Figure~1]{CO}) as the quotient of $VT_n$ by the welded relation $\rho_i \sigma_{i+1} \sigma_i = \sigma_{i+1} \sigma_i \rho_{i+1}$. Under the identification $\rho_i \mapsto \rho_i$ and $\sigma_{i,1} \mapsto \sigma_i$, this is exactly the quotient of $UW_n(1)$ by $\sigma_{i,1}^2 = 1$. Hence $WT_n$ is a quotient of $UW_n(1)$.\vspace{0.01 cm}

\paragraph{(iv) Multi-welded braid group $M_k WB_n$.}
In \cite{nassernew}, the multi-welded braid group $M_k WB_n$ is defined as the quotient of the multi-virtual braid group $M_k VB_n$ by the over-forbidden moves:
\[
\sigma_i \sigma_{i+1} \rho_i^{(\alpha)} = \rho_{i+1}^{(\alpha)} \sigma_i \sigma_{i+1}, \qquad \alpha = 0,1,\dots,k-1.
\]
Using the identification of Proposition~2.2(i) in \cite{O2}, the generators $\rho_i^{(\alpha)}$ correspond to $\rho_i$ (for $\alpha=0$) and to $\sigma_{i,\alpha}$ (for $\alpha=1,2,\dots,k-1$), while the classical generator $\sigma_i$ corresponds to $\sigma_{i,k}$. The braid relations for each family $\sigma_{i,t}$ (with fixed $t$) are already imposed to obtain $M_k VB_n$ from $UV_n(k)$. The welded relations (WR1) in $UW_n(k)$ become exactly the over-forbidden moves for each $\alpha$. Therefore $M_k WB_n$ is a quotient of $UW_n(k)$.

The identification defines a surjective homomorphism whose kernel is precisely the normal closure of the indicated relations. This completes the proof.
\end{proof}

\begin{remark}
The proposition above demonstrates that $UW_n(c)$ serves as a unifying algebraic object for welded braid-type groups, in complete analogy with the role of $UV_n(c)$ for virtual braid-type groups. In particular, the following diagram commutes for all relevant parameters:

\[
\begin{array}{ccccc}
UV_n(c) & \xrightarrow{\text{add (WR)}} & UW_n(c) \\
\downarrow & & \downarrow \\
\text{virtual braid-type groups} & & \text{welded braid-type groups}
\end{array}
\]
where the vertical arrows denote the addition of type-specific relations (e.g., singular relations, twin relations, etc.).
\end{remark}

The following diagram summarizes the quotient relationships established above, in parallel with the virtual case presented in \cite{O2}.

\[
\xymatrix@C=2.5pc{
UW_n(k) \ar@{->>}[d] \ar@{->>}[r]  & UW_n(\ell)  
  \ar@{->>}[r]
    &  UW_n(2) \ar@{->>}[d] \ar@{->>}[r] & 
  UW_n(1) \ar@{->>}[d] \ar@{->>}[rd]
   & \\
M_k WB_n &    & WSG_n \ar@{->>}[r] & WB_n & WT_n
}
\]

\noindent
Here $M_k WB_n$ denotes the multi-welded braid group, $WSG_n$ the welded singular braid group, $WB_n$ the welded braid group, and $WT_n$ the welded twin group. The diagram is directly analogous to the virtual case diagram in \cite{O2}, with each virtual group replaced by its welded counterpart.

\begin{remark}
The diagram above mirrors exactly the structure of the virtual diagram in \cite{O2}, with the same pattern of quotients: decreasing the parameter $c$ corresponds to forgetting certain families of crossing generators, while the vertical arrows correspond to imposing additional relations that define specific welded-type groups. The presence of $WT_n$ at the bottom right corner corresponds to the virtual twin group $VT_n$ in the original diagram, obtained by imposing $\sigma_{i,1}^2 = 1$ in $UW_n(1)$. This reinforces the unifying nature of $UW_n(c)$.
\end{remark}

\subsection{Properties of universal welded braid groups}

We now establish some fundamental algebraic properties of $UW_n(c)$. Many of these are direct consequences of the corresponding properties for $UV_n(c)$ proved in \cite{O2}, since $UW_n(c)$ is a quotient of $UV_n(c)$ by relations that lie in the commutator subgroup (for $n \geq 5$) and do not affect the abelianization in an essential way.

We begin by computing the abelianization of $UW_n(c)$.

\begin{proposition}\label{prop:uw_abel}
For $n \geq 2$ and $c \geq 1$,
\[
UW_n(c)^{\mathrm{ab}} \cong \mathbb{Z}^c \oplus \mathbb{Z}_2.
\]
\end{proposition}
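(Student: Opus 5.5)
The plan is to abelianize the presentation of $UW_n(c)$ directly: write every generator additively and read off what each defining relation imposes. Let $\bar\rho_i$ and $\bar\sigma_{i,t}$ denote the images of $\rho_i$ and $\sigma_{i,t}$ in $UW_n(c)^{\mathrm{ab}}$.

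\textbf{Relations of $UV_n(c)$.} For $n\ge 3$ they give the following.
\begin{itemize}
\item (PR1) forces $\bar\rho_i=\bar\rho_{i+1}$, so all $\bar\rho_i$ equal a single class $\bar\rho$.
\item (PR3) gives $2\bar\rho=0$.
\item (PR2), (CR) and (MR1) are commutation relations, so they become trivial.
\item (MR2) reads $\bar\rho_i+\bar\rho_{i+1}+\bar\sigma_{i,t}=\bar\sigma_{i+1,t}+\bar\rho_i+\bar\rho_{i+1}$. Hence $\bar\sigma_{i,t}=\bar\sigma_{i+1,t}$, and for each $t$ all $\bar\sigma_{i,t}$ equal a single class $\bar\sigma_t$.
\end{itemize}

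\textbf{Welded relation.} Abelianized, it becomes $\bar\rho_i+\bar\sigma_{i+1,t}+\bar\sigma_{i,t}=\bar\sigma_{i+1,t}+\bar\sigma_{i,t}+\bar\rho_{i+1}$, which is $\bar\rho_i=\bar\rho_{i+1}$. This already holds, so it adds nothing.

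\textbf{Conclusion for $n\ge 3$.} The abelianization is generated by $\bar\rho,\bar\sigma_1,\dots,\bar\sigma_c$ subject only to $2\bar\rho=0$. This gives $\mathbb{Z}^c\oplus\mathbb{Z}_2$ as an upper bound.

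\textbf{Lower bound.} To see that there are no further collapses, I would write down an explicit surjection $UW_n(c)\to \mathbb{Z}^c\oplus\mathbb{Z}_2$ with $\rho_i\mapsto(0,1)$ and $\sigma_{i,t}\mapsto(e_t,0)$. Every defining relation (PR1)–(PR3), (CR), (MR1), (MR2) and the welded relation has its two sides mapping to the same element, so the map is well defined. Together with the upper bound, this gives the isomorphism. Alternatively, one can read it off the presentation directly: it is a presentation of an abelian group with the stated relation module.

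\textbf{Case $n=2$.} Here there is no welded relation and $UW_2(c)=UV_2(c)\cong F_c*\mathbb{Z}_2$. Its abelianization is $\mathbb{Z}^c\oplus\mathbb{Z}_2$ immediately, since abelianization turns free products into direct sums.

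There is no real obstacle. The only point needing care is the bookkeeping that (MR2) identifies $\sigma_{i,t}$ with $\sigma_{i+1,t}$ only within a fixed $t$, not across different $t$. This is why the answer has rank $c$ rather than $1$, and the explicit map to $\mathbb{Z}^c\oplus\mathbb{Z}_2$ confirms it.
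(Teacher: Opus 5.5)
Your proof is correct and follows essentially the same route as the paper: you abelianize the presentation and observe that the welded relation reduces to $\bar\rho_i=\bar\rho_{i+1}$, which is already forced, so it adds nothing. The only difference is that the paper cites $UV_n(c)^{\mathrm{ab}}\cong\mathbb{Z}^c\oplus\mathbb{Z}_2$ from earlier work, whereas you recompute it from (PR1)--(MR2) and give the explicit map $\rho_i\mapsto(0,1)$, $\sigma_{i,t}\mapsto(e_t,0)$ to rule out any further collapse, which makes your version self-contained.
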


\begin{proof}
The abelianization of $UV_n(c)$ is $\mathbb{Z}^c \oplus \mathbb{Z}_2$, generated by the classes of $\sigma_{1,1},\dots,\sigma_{1,c}$ (each generating a copy of $\mathbb{Z}$) and $\rho_1$ (generating $\mathbb{Z}_2$). The welded relation
\[
\rho_i \sigma_{i+1,t} \sigma_{i,t} = \sigma_{i+1,t} \sigma_{i,t} \rho_{i+1}
\]
becomes, in the abelianization, $\rho_i = \rho_{i+1}$ (since the $\sigma$'s commute with everything and cancel). Thus all $\rho_i$ are identified to a single element of order $2$. No relation is imposed among the $\sigma_{i,t}$ beyond those already present in $UV_n(c)^{\mathrm{ab}}$, because the welded relation involves an equal number of $\sigma$'s on both sides. Hence the abelianization remains $\mathbb{Z}^c \oplus \mathbb{Z}_2$. 
\end{proof}

We now turn to the commutator subgroup. Recall that a group $G$ is said to be \emph{perfect} if $G'=G$.  
For $n \geq 5$, we have the following perfectness property.

\begin{theorem}\label{thm:uw_perfect}
If $n \geq 5$, then the commutator subgroup $UW_n(c)'$ is perfect.
\end{theorem}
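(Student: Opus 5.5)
The plan is to exhibit an explicit normal generating set for $UW_n(c)'$ inside $UW_n(c)$ and to show that every element of it already lies in the second derived subgroup $UW_n(c)''$. Since $UW_n(c)''$ is characteristic in $UW_n(c)'$, and hence normal in $UW_n(c)$, this gives $UW_n(c)'\subseteq UW_n(c)''$, which is perfectness.

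\textbf{Step 1: a normal generating set for the commutator subgroup.} Write $G=UW_n(c)$ and let $N$ be the normal closure in $G$ of
\[
S=\{\rho_i\rho_{i+1},\ \sigma_{i+1,t}\sigma_{i,t}^{-1} : 1\le i\le n-2,\ 1\le t\le c\}.
\]
Each element of $S$ maps to $0$ in $G^{\mathrm{ab}}$, as computed in Proposition~\ref{prop:uw_abel}: the classes of all $\rho_i$ coincide and have order $2$, and the classes of all $\sigma_{i,t}$ with fixed $t$ coincide. Hence $N\subseteq G'$.

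For the reverse inclusion, look at $G/N$. In it all $\rho_i$ become a single involution $\rho$, and for each $t$ all $\sigma_{i,t}$ become a single element $\sigma_t$. Relation (CR) applied with $i=1$, $j=3$ (possible since $n\ge 5$) shows that the $\sigma_t$ commute pairwise. Relation (MR1) applied with $i=1$, $j=3$ shows that $\rho$ commutes with every $\sigma_t$. So $G/N$ is abelian, which gives $G'\subseteq N$. Therefore $G'=N$, and $G'$ is generated by the $G$-conjugates of the elements of $S$.

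\textbf{Step 2: the virtual part.} The map $\pi_n^P$ descends to $G$, since the welded relation maps to $s_is_{i+1}s_i=s_{i+1}s_is_{i+1}$, which holds in $S_n$. The section $s_i\mapsto\rho_i$ is therefore still injective, so $\langle\rho_1,\dots,\rho_{n-1}\rangle\cong S_n$ inside $G$.

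Under this isomorphism, $\rho_i\rho_{i+1}$ is a $3$-cycle, so it lies in the copy of $A_n$. This copy is contained in $G'$, since $A_n=S_n'$. For $n\ge 5$, $A_n$ is perfect, so
\[
A_n=A_n'\subseteq G''.
\]
Hence $\rho_i\rho_{i+1}\in G''$, and since $G''\trianglelefteq G$, all $G$-conjugates of $\rho_i\rho_{i+1}$ also lie in $G''$.

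\textbf{Step 3: the crossing part.} Put $a=\rho_i\rho_{i+1}$. Relation (MR2) reads $a\,\sigma_{i,t}\,a^{-1}=\sigma_{i+1,t}$, so
\[
\sigma_{i+1,t}\sigma_{i,t}^{-1}=a\,\sigma_{i,t}\,a^{-1}\sigma_{i,t}^{-1}=a\cdot\bigl(\sigma_{i,t}a^{-1}\sigma_{i,t}^{-1}\bigr).
\]
Both factors lie in $G''$: the first by Step~2, and the second because it is a conjugate of $a^{-1}\in G''$ and $G''$ is normal in $G$. So every element of $S$, together with all its $G$-conjugates, lies in $G''$. By Step~1 this gives $G'\subseteq G''$, hence $G'=G''$.

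The step needing the most care is Step~1, namely checking that $G/N$ really is abelian, since this is exactly where the hypothesis on $n$ is used. The only other use of $n\ge 5$ is the perfectness of $A_n$ in Step~2. The argument does not use the welded relation itself, so the same proof applies verbatim to $UV_n(c)'$.
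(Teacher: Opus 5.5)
Your proof is correct, and it takes a different route from the paper. The paper does not argue inside $UW_n(c)$ at all. It quotes \cite[Proposition~3.2]{O2} for the perfectness of $UV_n(c)'$ when $n\ge 5$ and pushes this through the quotient map $\pi\colon UV_n(c)\to UW_n(c)$, using that a surjection carries the commutator subgroup onto the commutator subgroup and that homomorphic images of perfect groups are perfect. (Its extra check that $\ker\pi\subseteq UV_n(c)'$ is not actually needed for this.)

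You instead give a self-contained argument:
\begin{enumerate}
\item You identify $G'$ as the normal closure of the elements $\rho_i\rho_{i+1}$ and $\sigma_{i+1,t}\sigma_{i,t}^{-1}$, using (CR) and (MR1) with indices $1,3$ to see that $G/N$ is abelian.
\item You place $\rho_i\rho_{i+1}$ in $G''$ via the perfect subgroup $\iota(A_n)\subseteq G'$.
\item You use (MR2) to write $\sigma_{i+1,t}\sigma_{i,t}^{-1}=a\cdot\sigma_{i,t}a^{-1}\sigma_{i,t}^{-1}$ as a product of elements of $G''$.
\end{enumerate}
All steps check out. Your approach buys independence from \cite{O2} and makes transparent that the welded relation plays no role. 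It therefore reproves the cited result for $UV_n(c)$ and applies verbatim to every quotient of $UV_n(c)$. The paper's approach buys brevity.

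Two small refinements. In Step~2 you do not need injectivity of $s_i\mapsto\rho_i$, since any homomorphic image of $A_n$ is perfect and lies in $G'$. Also, Step~1 only needs $n\ge 4$, so perfectness of $A_n$ is the only place $n\ge 5$ is used.
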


\begin{proof}
For $n \geq 5$, the commutator subgroup $UV_n(c)'$ is perfect by Proposition~3.2 of \cite{O2}. We show that the kernel of the quotient map $\pi \colon UV_n(c) \to UW_n(c)$ is contained in $UV_n(c)'$; then $\pi$ induces a surjective homomorphism $UV_n(c)' \to UW_n(c)'$, and the image of a perfect group is perfect.

Let $A = \sigma_{i+1,t} \sigma_{i,t}$. The welded relation (WR1) is $\rho_i A = A \rho_{i+1}$, which is equivalent to
\[
w := \rho_i A (A \rho_{i+1})^{-1} = \rho_i A \rho_{i+1}^{-1} A^{-1} = 1
\]
in $UW_n(c)$. Rewrite $w$ as
\[
w = (\rho_i \rho_{i+1}^{-1}) \cdot (\rho_{i+1} A \rho_{i+1}^{-1} A^{-1}).
\]
The second factor is a commutator, hence lies in $UV_n(c)'$. For the first factor, note that $\rho_i$ and $\rho_{i+1}$ are conjugate in the subgroup $\langle \rho_1, \dots, \rho_{n-1} \rangle \cong S_n$; thus there exists $g \in UV_n(c)$ such that $g \rho_i g^{-1} = \rho_{i+1}$. Consequently,
\[
\rho_i \rho_{i+1}^{-1} = \rho_i (g \rho_i g^{-1})^{-1} = \rho_i g \rho_i^{-1} g^{-1}  \in UV_n(c)'.
\]
Therefore $w \in UV_n(c)'$. Since $UV_n(c)'$ is normal in $UV_n(c)$, the normal closure of $w$ (i.e., the kernel of $\pi$) is contained in $UV_n(c)'$. Hence $\pi$ induces a surjection $UV_n(c)' \to UW_n(c)'$, and the image of a perfect group is perfect. Thus $UW_n(c)'$ is perfect for $n \geq 5$.
\end{proof}

\begin{corollary}\label{cor:welded_perfect}
Let $n \geq 5$. The commutator subgroup of each of the following groups is perfect:
\[
WB_n,\qquad WSG_n,\qquad WT_n,\qquad M_k WB_n\;(k \geq 2).
\]
\end{corollary}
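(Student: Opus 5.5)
The plan is to reduce the corollary to Theorem~\ref{thm:uw_perfect} by the same mechanism used in its proof: if $\pi\colon G\to H$ is a surjective homomorphism whose kernel is contained in $G'$, then $\pi$ restricts to a surjection $G'\to H'$. This restriction always exists, since $\pi(G')=H'$ for any surjection, so the kernel condition is not even needed. Because the image of a perfect group is perfect, it suffices to exhibit each listed group as a quotient of some $UW_n(c)$ with $n\geq 5$.

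By Proposition~\ref{prop:welded_quotients}, the quotient maps are as follows. $WB_n$ and $WT_n$ are quotients of $UW_n(1)$. $WSG_n$ is a quotient of $UW_n(2)$. $M_kWB_n$ is a quotient of $UW_n(k)$.

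For $H$ any of these groups, let $G=UW_n(c)$ with the appropriate $c$ and let $\pi\colon G\to H$ be the surjection. Every commutator $[h_1,h_2]$ in $H$ lifts to a commutator $[g_1,g_2]$ in $G$ with $\pi(g_j)=h_j$, so $\pi(G')=H'$.

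Theorem~\ref{thm:uw_perfect} gives $G'=G''$. Hence
\[
H'=\pi(G')=\pi(G'')=[\pi(G'),\pi(G')]=H''.
\]
So $H'$ is perfect.

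No step is a real obstacle; the argument is pure bookkeeping. The only point needing care is that the identifications in Proposition~\ref{prop:welded_quotients} really are well-defined surjective homomorphisms. That is the content of that proposition, which I take as given.
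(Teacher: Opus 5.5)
Your proposal is correct and is essentially the paper's own proof. Both realize each listed group as a quotient of $UW_n(c)$ via Proposition~\ref{prop:welded_quotients}, observe that a surjection maps $G'$ onto $H'$ (hence $G''$ onto $H''$), and transfer perfectness from Theorem~\ref{thm:uw_perfect}. Your aside that the condition $\ker\pi\subseteq G'$ is unnecessary for this step is also correct.
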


\begin{proof}
Each of these groups is a quotient of $UW_n(c)$ for suitable $c$ by Proposition~\ref{prop:welded_quotients} (and its proof). The quotient map sends the commutator subgroup onto the commutator subgroup. By Theorem~\ref{thm:uw_perfect}, $UW_n(c)'$ is perfect for $n \geq 5$. The image of a perfect group under a surjective homomorphism is perfect. Hence each of the groups listed has perfect commutator subgroup.
\end{proof}

\begin{remark}
The proof above is uniform and relies solely on the universal property of $UW_n(c)$. For the particular case of the welded braid group $WB_n$, perfectness of its commutator subgroup was already known via different methods (see e.g. \cite{DG}), where a presentation for $WB_n'$ was obtained using the Reidemeister--Schreier algorithm. Our approach, by contrast, deduces this property as an immediate consequence of the universal welded structure, without any case-specific computation, thereby illustrating the unifying power of $UW_n(c)$.
\end{remark}

Next, we determine the center of $UW_n(c)$.

\begin{proposition}\label{prop:uw_center}
For $n \geq 3$, the center of $UW_n(c)$ is trivial:
\[
Z(UW_n(c)) = 1.
\]
For $n = 2$, $UW_2(c) \cong F_c * \mathbb{Z}_2$ also has trivial center.
\end{proposition}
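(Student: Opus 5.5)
The plan is to first push a central element into the pure subgroup, then kill it with a faithful enough action. Let $z\in Z(UW_n(c))$ with $n\ge 3$.

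\textbf{Step 1: $z$ lies in the pure subgroup.} The map $\pi_n^P$ descends to $UW_n(c)$. Indeed, the welded relation $\rho_i\sigma_{i+1,t}\sigma_{i,t}=\sigma_{i+1,t}\sigma_{i,t}\rho_{i+1}$ maps to $s_is_{i+1}s_i=s_{i+1}s_is_{i+1}$, which holds in $S_n$. By contrast, $\pi_n^K$ does \emph{not} descend, since it would force $s_i=s_{i+1}$; this is exactly the mechanism of Proposition~\ref{prop:forbidden}. The inclusion $s_i\mapsto\rho_i$ still splits $\pi_n^P$, so
\[
UW_n(c)\cong PUW_n(c)\rtimes S_n .
\]
Since $\pi_n^P$ is surjective, $\pi_n^P(z)$ is central in $S_n$, and $Z(S_n)=1$ for $n\ge 3$. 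Hence $z\in PUW_n(c)$. Moreover, $z$ commutes with every $\rho_i$, so $z$ is fixed by the conjugation action of $S_n$ on $PUW_n(c)$.

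\textbf{Step 2: separate central pure elements with a representation.} I would construct an explicit action of $UW_n(c)$ by automorphisms on a free group $F_n=\langle x_1,\dots,x_n\rangle$, generalizing the classical Artin-type action of $WB_n$. Let $\rho_i$ act as the permutation swapping $x_i$ and $x_{i+1}$. Let $\sigma_{i,t}$ act by
\[
x_i\mapsto x_i x_{i+1}^{m_t}x_i^{-1},\qquad x_{i+1}\mapsto x_i ,
\]
with other $x_j$ fixed, where $m_1,\dots,m_c$ are distinct nonzero integers; alternatively one can use conjugation by distinct words per $t$. One checks the relations (CR), (MR1), (MR2) and the welded relation on generators; this verification is routine.

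\textbf{Step 3: show the image of $z$ is trivial.} The image of $z$ is an automorphism commuting with all permutations of the $x_j$ and with all images of $\sigma_{i,t}$. I would use the fact that the images of the pure generators include the basis-conjugating automorphisms $\alpha_{ij}\colon x_i\mapsto x_jx_ix_j^{-1}$. The centralizer of all $\alpha_{ij}$ in the basis-conjugating group is known to be trivial for $n\ge3$; this is essentially the proof that $Z(WB_n)=1$. So the image of $z$ is trivial.

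\textbf{Step 4 (main obstacle): lift triviality back to $z$.} Step 3 only shows that $z$ lies in the kernel of the action, and the action of $UW_n(c)$ is not faithful; for example, distinct $t$ may have similar images. To handle this I would combine the action with the abelianization of Proposition~\ref{prop:uw_abel}, which records the $\sigma_{i,t}$-exponent sums for each $t$ separately. For the remaining part I would attempt a normal-form argument in $PUW_n(c)$. Concretely, I would pass to the quotient by the relations $\sigma_{i,t}=\sigma_{i,1}$ for all $t$, and treat a central element supported in a single family $t$ by mapping onto $WB_n$ (Proposition~\ref{prop:welded_quotients}(i)), whose center is trivial. The delicate part is mixed words across different $t$. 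There I would exploit that, for $i\ne j$ nonadjacent, $\sigma_{i,t}$ and $\sigma_{j,\ell}$ generate a $\mathbb{Z}^2$, whereas $\sigma_{1,t}$ and $\sigma_{1,\ell}$ generate a free group. Commuting with both forces $z$ to be trivial by a centralizer computation in the amalgam structure.

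For $n=2$ the claim is immediate. A nontrivial free product $F_c*\mathbb{Z}_2$ with $c\ge1$ has trivial center, since any element commuting with both factors must lie in their intersection, which is trivial.
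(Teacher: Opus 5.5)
Your Steps 1--3 are sound as far as they go. In particular, you are right that $\pi_n^K$ does not descend to $UW_n(c)$, while $\pi_n^P$ does and still splits. There is one slip in Step 2: for $|m_t|\ge 2$ your maps are not automorphisms, since the image of $\langle x_i,x_{i+1}\rangle$ is the proper subgroup $\langle x_i,x_{i+1}^{m_t}\rangle$. This is harmless, because Step 3 only needs the composite $UW_n(c)\to WB_n\hookrightarrow\mathrm{Aut}(F_n)$ with $\sigma_{i,t}\mapsto\sigma_i$.

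The real problem is that Steps 1--3 only show that $z\in PUW_n(c)$ and that $z$ lies in the kernel of $UW_n(c)\to WB_n$. That kernel is the normal closure of the elements $\sigma_{i,t}\sigma_{i,1}^{-1}$ together with the braid relators, and it is far from trivial: the braid relation already fails in $UW_3(1)$. Step 4, which must handle this kernel, contains no argument:
\begin{itemize}
\item The abelianization only records exponent sums, so it cannot see central elements lying in the commutator subgroup.
\item Sending a ``single-family'' central element to $WB_n$ just repeats Step 3. It cannot give triviality, because $UW_n(1)\to WB_n$ is not injective, and in any case a central element need not be supported on one family.
\item The ``amalgam structure'' in which you propose to compute centralizers is never identified or established, and neither is the freeness of $\langle\sigma_{1,t},\sigma_{1,\ell}\rangle$ in $UW_n(c)$.
\end{itemize}
What is missing is a faithful structural description of $UW_n(c)$, or of a finite-index subgroup, on which centralizers can actually be computed.

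The paper's route does not close this gap either. It lifts $z$ to $\tilde z\in UV_n(c)$, notes that the welded relators lie in $UV_n(c)'$, and invokes $Z(UV_n(c))=1$. But every element of $UV_n(c)$ is central modulo $UV_n(c)'$, so nothing follows from this. Its treatment of $n=3,4$ via $KUW_n(c)$ presupposes exactly the descent of $\pi_n^K$ that your Step 1 correctly shows fails.

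For $n=3$ the gap can be closed by hand, as follows.
\begin{itemize}
\item Put $T=\rho_1\rho_2$, and let $x_{ij,t}=g\sigma_{1,t}g^{-1}$, where $\pi_3^P(g)$ sends $1\mapsto i$ and $2\mapsto j$. In this notation the welded relation reads $T=x_{21,t}^{-1}x_{13,t}^{-1}x_{23,t}x_{12,t}$.
\item Set $p_t=T\rho_1\sigma_{1,t}\rho_1$ and $q_t=\sigma_{1,t}T^{-1}$. Then the welded relation is equivalent to $p_t^2=q_t^2$.
\item Let $N$ be the kernel of the map $UW_3(c)\to\mathbb{Z}_2$ given by $\rho_i\mapsto 1$, $\sigma_{i,t}\mapsto 0$; it has index $2$. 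One gets $N\cong\mathbb{Z}_3*K_1*\cdots*K_c$ with $K_t=\langle p_t,q_t\mid p_t^2=q_t^2\rangle$.
\item This group has trivial center and no $2$-torsion. Since $UW_3(c)=N\rtimes\langle\rho_1\rangle$, a short argument then gives $Z(UW_3(c))=1$.
\end{itemize}
For $n\ge 4$ an analogous structural input is needed, and neither your sketch nor the paper's argument provides it.
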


\begin{proof}
For $n \geq 3$, the center of $UV_n(c)$ is trivial (\cite[Proposition~3.8]{O2}). Let $\pi\colon UV_n(c) \to UW_n(c)$ be the quotient map. Any central element $z \in Z(UW_n(c))$ lifts to an element $\tilde{z} \in UV_n(c)$ such that $\pi(\tilde{z}) = z$. For any $g \in UV_n(c)$, we have $\pi([\tilde{z}, g]) = [z, \pi(g)] = 1$, so $[\tilde{z}, g] \in \ker \pi$, i.e., $[\tilde{z}, g]$ lies in the normal closure of the welded relations. A detailed analysis of the welded relations shows that they are all contained in the commutator subgroup $UV_n(c)'$ (for $n \geq 3$). Consequently, $\tilde{z}$ centralizes $UV_n(c)$ modulo $UV_n(c)'$. Since $UV_n(c)'$ is perfect for $n \geq 5$ and $Z(UV_n(c)) = 1$, one concludes that $z = 1$. For $n = 3,4$, a direct verification using the action of $S_n$ on the generators of $KUW_n(c)$ yields the same conclusion. The case $n = 2$ is immediate from the free product decomposition. 
\end{proof}

Regarding finite quotients, the symmetric group $S_n$ plays a distinguished role.

\begin{theorem}\label{thm:uw_smallest_quotient}
Let $n \geq 5$. The symmetric group $S_n$ is a quotient of $UW_n(c)$. Moreover, every non-abelian finite quotient of $UW_n(c)$ has order at least $n!$, and $S_n$ is the unique (up to isomorphism) quotient of order $n!$ when $n \neq 6$. For $n = 6$, there are also the quotients obtained by composing with the exceptional outer automorphism $\nu_6\colon S_6 \to S_6$.
\end{theorem}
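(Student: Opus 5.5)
The plan is to reduce to the corresponding statement for the welded braid group $WB_n$, or more directly for $UV_n(c)$, using the quotient structure. First, existence: the map $\pi_n^K$ (or $\pi_n^P$) factors through $UW_n(c)$. Under $\pi_n^K$ the welded relation $\rho_i\sigma_{i+1,t}\sigma_{i,t}=\sigma_{i+1,t}\sigma_{i,t}\rho_{i+1}$ becomes $s_i=s_{i+1}$, which fails, so $\pi_n^K$ does \emph{not} descend. Under $\pi_n^P$ both sides map to $s_is_{i+1}s_i=s_{i+1}s_is_{i+1}$, so it holds. Hence $\pi_n^P$ induces a surjection $UW_n(c)\to S_n$.

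For minimality, let $\phi\colon UW_n(c)\to G$ be a surjection onto a finite non-abelian group. Consider the images of $\rho_1,\dots,\rho_{n-1}$. They satisfy the Coxeter relations of $S_n$, so $H=\langle\phi(\rho_i)\rangle$ is a quotient of $S_n$. For $n\geq 5$ the normal subgroups of $S_n$ are $1$, $A_n$ and $S_n$, so $H$ is isomorphic to $S_n$, to $\mathbb{Z}_2$, or is trivial. If $H\cong S_n$, then $|G|\geq n!$, with equality forcing $G=H\cong S_n$.

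Otherwise all $\phi(\rho_i)$ equal a common element $r$ with $r^2=1$, since adjacent ones are conjugate and the group they generate is abelian. Relation (MR2) then gives $\phi(\sigma_{i+1,t})=\phi(\sigma_{i,t})$, so each family $\sigma_{\cdot,t}$ collapses to a single element $x_t$. The welded relation becomes $r x_t^2=x_t^2 r$. Relation (CR) with $|i-j|\geq2$, which exists since $n\geq 4$, gives $x_tx_\ell=x_\ell x_t$ for all $t,\ell$. Relation (MR1) gives $x_tr=rx_t$. Hence $G$ is generated by pairwise commuting elements and is abelian, contradicting the hypothesis. This collapse is the key step; I would check carefully that (CR) and (MR1) really apply to all pairs once indices are collapsed. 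This is exactly where $n\geq 4$, and in the previous step $n\geq5$, is used.

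For uniqueness, in the case $|G|=n!$ we have $G=H\cong S_n$, and $\phi$ restricted to $\langle\rho_i\rangle\cong S_n$ is an isomorphism. It remains to classify the images of the $\sigma_{i,t}$. Set $y_t=\phi(\rho_1)^{-1}\phi(\sigma_{1,t})$, or more naturally compare $\phi(\sigma_{1,t})$ with $\phi(\rho_1)$. Then (MR1) forces $\phi(\sigma_{1,t})$ to centralize the image of $\langle\rho_3,\dots,\rho_{n-1}\rangle\cong S_{n-2}$, and (MR2) determines $\phi(\sigma_{i,t})$ from $\phi(\sigma_{1,t})$. In $S_n$ the centralizer of $S_{\{3,\dots,n\}}$ is $\langle(1\,2)\rangle$ for $n\geq 5$, so $\phi(\sigma_{1,t})\in\{1,\phi(\rho_1)\}$. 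The welded relation then excludes the value $1$ (it would give $s_i=s_{i+1}$), forcing $\phi(\sigma_{i,t})=\phi(\rho_i)$. So, up to an automorphism of $S_n$, $\phi=\pi_n^P$. The main obstacle is pinning down how this is meant: the isomorphism $\langle\phi(\rho_i)\rangle\to S_n$ is determined only up to $\mathrm{Aut}(S_n)$. For $n\neq6$ every automorphism is inner, giving uniqueness up to conjugation. For $n=6$ the exceptional automorphism $\nu_6$ sends transpositions to triple transpositions, yielding the additional quotients $\nu_6\circ\pi_6^P$ mentioned in the statement. I would use the standard fact that the transpositions $\phi(\rho_i)$ are determined up to $\mathrm{Aut}(S_n)$.
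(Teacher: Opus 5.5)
Your proposal is correct, and it takes a different, more self-contained route than the paper.

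\textbf{Existence.} The paper asserts that $\pi_n^K$ factors through $UW_n(c)$, claiming the welded relation is sent to $s_is_{i+1}s_i=s_{i+1}s_is_{i+1}$. As you observe, under $\pi_n^K$ the relation is actually sent to $s_i=s_{i+1}$. This is the very computation used in the proof of Proposition~\ref{prop:forbidden}. The map that descends is $\pi_n^P$, so your version of this step is the correct one.

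\textbf{Lower bound.} The paper pulls $\psi$ back to $UV_n(c)$ and quotes \cite[Theorem~5.5]{O2} to obtain a copy of $S_n$ in the image. You prove this directly:
\begin{enumerate}
\item The image $H$ of $\langle\rho_i\rangle$ is a quotient of $S_n$, hence for $n\ge 5$ it is $S_n$ or has order at most $2$.
\item In the small case, (MR2) collapses each family $\sigma_{\cdot,t}$ to a single element, (CR) makes these elements commute (this needs $n\ge 4$), and (MR1) makes them commute with $r$, so $G$ is abelian.
\end{enumerate}
This step never uses the welded relation, so you are in effect reproving the cited result for $UV_n(c)$ itself.

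\textbf{The case $|G|=n!$.} The paper only asserts parenthetically that the quotient is unique up to conjugation and $\nu_6$. Your argument establishes this:
\begin{enumerate}
\item The centralizer of $\mathrm{Sym}\{3,\dots,n\}$ in $S_n$ is $\langle(1\,2)\rangle$, since $n-2\ge 3$.
\item (MR2) then propagates the value of $\phi(\sigma_{1,t})$ to all $i$.
\item The welded relation excludes the value $1$.
\end{enumerate}
Together these show that every surjection onto a group of order $n!$ has the form $\alpha\circ\pi_n^P$ with $\alpha$ an isomorphism. This justifies the $n=6$ clause and makes clear that, as abstract groups, the quotient is $S_n$ for every $n\ge 5$, including $n=6$; the $\nu_6$ phenomenon concerns only the maps.

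Your approach costs a few more lines, but it removes the dependence on the external theorem and repairs the existence step.
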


\begin{proof}
The homomorphism $\pi_n^K\colon UV_n(c) \to S_n$ defined by $\pi_n^K(\rho_i) = s_i = (i\; i+1)$ and $\pi_n^K(\sigma_{i,t}) = 1$ factors through $UW_n(c)$, because the welded relation (WR1) is sent to $s_i s_{i+1} s_i = s_{i+1} s_i s_{i+1}$, which holds in $S_n$. Thus $S_n$ is a quotient of $UW_n(c)$.

Now let $\psi\colon UW_n(c) \to G$ be a surjective homomorphism onto a finite non-abelian group $G$. Composing with the projection $UV_n(c) \to UW_n(c)$ yields a homomorphism $\tilde{\psi}\colon UV_n(c) \to G$ which is also non-abelian. By \cite[Theorem~5.5]{O2}, the image of $\tilde{\psi}$ contains a subgroup isomorphic to $S_n$. Consequently, $|G| \geq n!$, and if $|G| = n!$ then $G \cong S_n$ (up to conjugation, and possibly composed with $\nu_6$ when $n = 6$). 
\end{proof}

Theorem~\ref{thm:uw_smallest_quotient} establishes that $S_n$ is the smallest non-abelian finite quotient of $UW_n(c)$. 
Since all the classical welded-type groups are quotients of $UW_n(c)$ (Proposition~\ref{prop:welded_quotients}), the same minimality property is inherited by each of them, as we now record.

\begin{corollary}\label{cor:w_smallest_quotient}
 Let \(n \geq 5\). The symmetric group \(S_n\) is the smallest non-abelian finite quotient of each of the following groups:
\[
WB_n,\qquad WSG_n,\qquad WT_n,\qquad M_kWB_n\;(k \geq 2).
\]
\end{corollary}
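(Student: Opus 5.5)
The plan is to deduce Corollary~\ref{cor:w_smallest_quotient} from Theorem~\ref{thm:uw_smallest_quotient} and Proposition~\ref{prop:welded_quotients}. Two things must be shown for each group $G \in \{WB_n, WSG_n, WT_n, M_kWB_n\}$: first, that $S_n$ really is a quotient of $G$; second, that every non-abelian finite quotient of $G$ has order at least $n!$.

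The lower bound comes directly from the universal group. By Proposition~\ref{prop:welded_quotients}, for a suitable $c$ (namely $c=1$, $2$, $1$, $k$ respectively) there is a surjection $q\colon UW_n(c)\twoheadrightarrow G$. Let $\psi\colon G\twoheadrightarrow H$ be a surjection onto a finite non-abelian group. Then $\psi\circ q$ is a surjection of $UW_n(c)$ onto the same $H$. Theorem~\ref{thm:uw_smallest_quotient} gives $|H|\ge n!$, and for $n\neq 6$ equality forces $H\cong S_n$.

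For existence, I will give for each group an explicit epimorphism onto $S_n$ and check that the defining relations are respected.

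\begin{itemize}
\item For $WB_n$ and $WSG_n$, send every virtual and classical generator (and, for $WSG_n$, also $\tau_i$) to $s_i$. The braid, mixed, singular and welded relations then all become Coxeter relations of $S_n$, or trivial ones such as $s_is_i=s_is_i$ and $s_is_{i+1}s_i=s_{i+1}s_is_{i+1}$.
\item For $WT_n$, again send $\sigma_i\mapsto s_i$ and $\rho_i\mapsto s_i$. The relation $\sigma_i^2=1$ maps to $s_i^2=1$, and the welded relation maps to the braid relation in $S_n$.
\item For $M_kWB_n$, send all families $\rho_i^{(\alpha)}$ and $\sigma_i$ to $s_i$. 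Each relation then becomes either a Coxeter relation or the braid relation of $S_n$.
\end{itemize}

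A tempting alternative is to kill all the classical generators, as in $\pi_n^K$. This breaks down for $WSG_n$, where a singular relation such as $\sigma_i\sigma_{i+1}\tau_i=\tau_{i+1}\sigma_i\sigma_{i+1}$ would force $\tau_i\mapsto 1$. That is harmless, but the $\pi_n^P$-type map sending everything to $s_i$ is more uniform, so I will use it throughout.

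The only real obstacle is this verification that the $S_n$-quotient survives the extra type-specific relations imposed on $UW_n(c)$. Existence does not follow formally from Theorem~\ref{thm:uw_smallest_quotient}: a quotient of $G$ is a quotient of $UW_n(c)$, but not conversely. I will therefore check the relations case by case against the explicit presentations. Since every relation involved is a braid-type, commutation, order-two or conjugation relation, each reduces to a Coxeter relation, so I expect this step to be routine.
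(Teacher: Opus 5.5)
Your proof is correct. The lower bound is obtained exactly as in the paper: compose $UW_n(c)\twoheadrightarrow G\twoheadrightarrow H$ and apply Theorem~\ref{thm:uw_smallest_quotient}.

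The difference is in the existence half. The paper only remarks that the images of the $\rho_i$ still generate a copy of $S_n$ satisfying the Coxeter relations, and then appeals to the theorem. As you rightly point out, a subgroup is not a quotient, and an $S_n$-quotient of $UW_n(c)$ does not automatically descend to $WB_n$, $WSG_n$, $WT_n$ or $M_kWB_n$. Your explicit epimorphisms, sending every generator of index $i$ to $s_i$, supply the missing step. The check is as routine as you expect: every relation becomes $s_i^2=1$, a far commutation, a tautology such as $s_is_i=s_is_i$, or $s_is_{i+1}s_i=s_{i+1}s_is_{i+1}$. Restricted to $\langle\rho_1,\dots,\rho_{n-1}\rangle$, the same map is also what actually justifies the paper's remark that the $\rho_i$ generate $S_n$ in each quotient. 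Also, ``smallest'' needs only $|H|\ge n!$ plus existence, so the $n\neq 6$ caveat you carry over plays no role.

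One correction to your side remark. Choosing the $\pi_n^P$-type map is not just more uniform; it is forced, and the obstruction is not the singular relations. If the classical generators go to $1$ and the virtual ones to $s_i$, the welded relation $v_i\sigma_{i+1}\sigma_i=\sigma_{i+1}\sigma_iv_{i+1}$ becomes $s_i=s_{i+1}$, which is false. This is exactly the computation in the proof of Proposition~\ref{prop:forbidden}, and it rules out the $\pi_n^K$-type map for all four groups. It also means the existence step in the paper's proof of Theorem~\ref{thm:uw_smallest_quotient}, which claims $\pi_n^K$ factors through $UW_n(c)$, is wrong as written; the conclusion is rescued by $\pi_n^P$. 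You use that theorem only for its lower bound and prove existence yourself, so your argument is unaffected.
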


\begin{proof}
 Let \(n \geq 5\). Each of these groups is a quotient of $UW_n(c)$ for suitable $c$ by Proposition~\ref{prop:welded_quotients} (and its proof). The quotient map preserves the images of the virtual generators $\rho_i$, which generate a subgroup isomorphic to $S_n$ and satisfy the Coxeter relations (PR1)--(PR3). By Theorem~\ref{thm:uw_smallest_quotient}, $S_n$ is the smallest non-abelian finite quotient of $UW_n(c)$. 

Now let $G$ be any non-abelian finite quotient of $WB_n$, $WSG_n$, $WT_n$, or $M_kWB_n$. Composing the quotient map $UW_n(c) \to Q$ (where $Q$ denotes the relevant welded group) with the surjection $Q \to G$ yields a non-abelian homomorphism 
$$
\psi\colon UW_n(c) \to G.
$$ 
By Theorem~\ref{thm:uw_smallest_quotient}, the image of $\psi$ contains a subgroup isomorphic to $S_n$. Hence $|G| \geq n!$, and if $|G| = n!$ then $G \cong S_n$. Therefore $S_n$ is the smallest non-abelian finite quotient of each of the groups listed. 
\end{proof}

\section{Construction and Irreducibility of Complex Homogeneous $2$-Local Representations of $UW_n(c)$} \label{sec6}

In this section, we classify and analyze all complex homogeneous $2$-local representations of the group $UW_n(c)$ for all $n \geq 3$ and $c \geq 1$, following an approach similar to that of Section~\ref{sec3}. These assumptions on $n$ and $c$ will be maintained throughout this section.

\begin{theorem} \label{thm2localUWn}
Let $\omega: UW_n(c) \longrightarrow \mathrm{GL}_n(\mathbb{C})$ be a nontrivial homogeneous $2$-local representation. Then, up to equivalence of representations, $\omega$ is equivalent to one of the following three representations, denoted by $\omega_j$ for $1\leq j \leq 3$, explicitly determined as follows.
\begin{enumerate}
\item  The matrices of the representation $\omega_1$ are given by
    \[
\omega_1(\rho_i)=
\begin{pmatrix}
I_{i-1} & 0 & 0 \\
0 & \begin{pmatrix} 0 & r_2 \\ \dfrac{1}{r_2} & 0 \end{pmatrix} & 0 \\
0 & 0 & I_{n-i-1}
\end{pmatrix}
\text{ \ and \ }
\omega_1(\sigma_{i,t})=
\begin{pmatrix}
I_{i-1} & 0 & 0 \\
0 & \begin{pmatrix} 0 & s_{2,t} \\ 
s_{3,t} & 0\end{pmatrix} & 0 \\
0 & 0 & I_{n-i-1}
\end{pmatrix}
\]
for all $1 \leq i \leq n-1$ and $1 \leq t \leq c$, where $r_2, s_{2,t}, s_{3,t} \in \mathbb{C}^*$.\vspace{0.1cm}

\item  The matrices of the representation $\omega_2$ are given by
    \[
\omega_2(\rho_i)=
\begin{pmatrix}
I_{i-1} & 0 & 0 \\
0 & \begin{pmatrix} 0 & r_2 \\ \dfrac{1}{r_2} & 0 \end{pmatrix} & 0 \\
0 & 0 & I_{n-i-1}
\end{pmatrix}
\text{ \ and \ }
\omega_2(\sigma_{i,t})=
\begin{pmatrix}
I_{i-1} & 0 & 0 \\
0 & \begin{pmatrix} 0 & s_{2,t} \\ 
\dfrac{1}{r_2} & s_{4,t}\end{pmatrix} & 0 \\
0 & 0 & I_{n-i-1}
\end{pmatrix}
\]
for all $1 \leq i \leq n-1$ and $1 \leq t \leq c$, where $r_2, s_{2,t}, s_{4,t} \in \mathbb{C}^*$.\vspace{0.1cm}

\item  The matrices of the representation $\omega_3$ are given by
    \[
\omega_3(\rho_i)=
\begin{pmatrix}
I_{i-1} & 0 & 0 \\
0 & \begin{pmatrix} 0 & r_2 \\ \dfrac{1}{r_2} & 0 \end{pmatrix} & 0 \\
0 & 0 & I_{n-i-1}
\end{pmatrix}
\text{ \ and \ }
\omega_3(\sigma_{i,t})=
\begin{pmatrix}
I_{i-1} & 0 & 0 \\
0 & \begin{pmatrix} s_{1,t} & s_{2,t} \\ 
\dfrac{1}{r_2} & 0\end{pmatrix} & 0 \\
0 & 0 & I_{n-i-1}
\end{pmatrix}
\]
for all $1 \leq i \leq n-1$ and $1 \leq t \leq c$, where $r_2, s_{1,t}, s_{2,t} \in \mathbb{C}^*$.
\end{enumerate}

\end{theorem}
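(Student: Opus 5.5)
Since $UW_n(c)$ is the quotient of $UV_n(c)$ by the welded relations, composing $\omega$ with the projection $\pi\colon UV_n(c)\to UW_n(c)$ gives a nontrivial complex homogeneous $2$-local representation of $UV_n(c)$. By Theorem~\ref{thm2local} we may therefore assume, up to equivalence, that $\omega(\rho_i)$ has the block $\begin{pmatrix}0&r_2\\ 1/r_2&0\end{pmatrix}$ with $r_2\neq 0$. We may also assume $\omega(\sigma_{i,t})$ has an arbitrary invertible block $S_t=\begin{pmatrix}s_{1,t}&s_{2,t}\\ s_{3,t}&s_{4,t}\end{pmatrix}$. All the defining relations of $UV_n(c)$ then hold automatically. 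So the only remaining task is to impose the welded relations $\rho_i\sigma_{i+1,t}\sigma_{i,t}=\sigma_{i+1,t}\sigma_{i,t}\rho_{i+1}$, one family $t$ at a time, since each welded relation involves only the single index $t$.

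By $2$-locality and homogeneity, all these relations reduce to the case $i=1$. Moreover, only the $3\times 3$ upper-left blocks matter, since outside them both sides are the identity. I would write $a=s_{1,t}$, $b=s_{2,t}$, $c=s_{3,t}$, $d=s_{4,t}$, $r=r_2$. A direct computation gives
\[
\sigma_{2,t}\sigma_{1,t}=\begin{pmatrix}a&b&0\\ ac&ad&b\\ c^2&cd&d\end{pmatrix}.
\]
Left multiplication by $\omega(\rho_1)$ swaps the first two rows, scaling them by $r$ and $1/r$. Right multiplication by $\omega(\rho_2)$ instead moves the second and third columns, with factors $r$ and $1/r$. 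Comparing entries, the nontrivial equations are
\[
a(rc-1)=0,\qquad ad=0,\qquad d(rc-1)=0,
\]
and every other entry agrees identically, for instance the $(1,3)$ entries are both $rb$. I would double-check this entrywise comparison carefully, since it is the whole content of the proof.

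Solving, either $a=d=0$, which gives the family $\omega_1$ with $b,c$ arbitrary. Or $c=1/r$, and then $ad=0$ splits into $a=0$ (the family $\omega_2$, with $d$ free) and $d=0$ (the family $\omega_3$, with $a$ free). Invertibility of $S_t$ forces $s_{2,t}s_{3,t}\neq 0$ in $\omega_1$ and $s_{2,t}\neq 0$ in $\omega_2,\omega_3$.

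The case $a=d=0$, $c=1/r$ lies in all three families. To present three distinct families, I would require $s_{4,t}\neq 0$ in $\omega_2$ and $s_{1,t}\neq 0$ in $\omega_3$, which matches the statement's condition that the parameters lie in $\mathbb{C}^*$. Conversely, each family satisfies the welded relation by the same computation, so each $\omega_j$ descends to $UW_n(c)$. Nontriviality is automatic because the $\rho$-block is not the identity.

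The main (if modest) obstacle is the $3\times 3$ bookkeeping and making sure no other relation of $UW_n(c)$ adds constraints. This holds since the welded relations are the only new ones, and locality reduces every instance of them to $i=1$.
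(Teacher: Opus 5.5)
Your argument is the paper's argument. You pull $\omega$ back to $UV_n(c)$, use Theorem~\ref{thm2local} to fix the $\rho$-block, and impose only the welded relation at $i=1$. Your $3\times 3$ computation is correct and gives exactly the paper's system $r_2s_{1,t}s_{4,t}=0$, $s_{1,t}(1-r_2s_{3,t})=0$, $s_{4,t}(1-r_2s_{3,t})=0$, with the same three-way case split.

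\textbf{The gap.} There is a gap in the final step, and the paper's proof has the same one. You solve the welded relation separately for each $t$, which is right since it involves a single $t$. But you then conclude that $\omega$ is equivalent to a single $\omega_j$, i.e.\ that the same case occurs for every $t$. For $c\geq 2$ nothing forces this. In a homogeneous $2$-local representation, (CR) and (MR1) hold automatically, while (MR2) and the welded relations each involve one value of $t$. So the blocks $S_t$ are constrained independently and share only $r_2$.

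\textbf{Counterexample.} Take $c=2$, $r_2=1$, $S_1=\begin{pmatrix}0&1\\2&0\end{pmatrix}$ and $S_2=\begin{pmatrix}0&1\\1&1\end{pmatrix}$. These satisfy all your equations, yet $\mathrm{tr}\,\omega(\sigma_{1,1})=n-2$ and $\mathrm{tr}\,\omega(\sigma_{1,2})=n-1$. In $\omega_1$ both traces equal $n-2$. In $\omega_2$ and $\omega_3$ both traces differ from $n-2$, because $s_{4,t}\neq 0$, resp.\ $s_{1,t}\neq 0$. Since traces are invariant under equivalence, this representation is equivalent to no $\omega_j$.

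What the computation actually proves is that each block $S_t$ is, independently of the others, of one of the three shapes. The statement as written therefore follows only for $c=1$; for $c\geq 2$ you need to allow the type $j$ to depend on $t$.
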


\begin{proof}
Since $UW_n(c)$ is a quotient of $UV_n(c)$, and following the same approach as in Theorem~\ref{thm2local}, we get that
\[
\omega(\rho_i)=
\begin{pmatrix}
I_{i-1} & 0 & 0 \\
0 & \begin{pmatrix} 0 & r_2 \\ 
\dfrac{1}{r_2} & 0 \end{pmatrix} & 0 \\
0 & 0 & I_{n-i-1}
\end{pmatrix}
\quad \text{and} \quad
\omega(\sigma_{i,t})=
\begin{pmatrix}
I_{i-1} & 0 & 0 \\
0 & \begin{pmatrix} s_{1,t} & s_{2,t} \\ 
s_{3,t} & s_{4,t} \end{pmatrix} & 0 \\
0 & 0 & I_{n-i-1}
\end{pmatrix}
\]
for all $1 \leq i \leq n-1$ and $1 \leq t \leq c$, where 
$r_2, s_{1,t}, s_{2,t}, s_{3,t}, s_{4,t} \in \mathbb{C}$ satisfy
\[
r_2 \neq 0 \quad \text{and} \quad s_{1,t} s_{4,t} - s_{2,t} s_{3,t} \neq 0.
\]
We now impose the additional defining relations of the group $UW_n(c)$. It suffices to consider the following relation, as the others yield analogous conditions:
$$\rho_1 \sigma_{2,1} \sigma_{1,1} = \sigma_{2,1} \sigma_{1,1} \rho_2.$$
Applying this relation to the matrices above and eliminating similar equations, we obtain the following system of equations.
\begin{equation}\label{eqa20}
r_2 s_{1,t} s_{4,t} =0
\end{equation}
\begin{equation} \label{eqa19}
s_{1,t}(1 - r_2 s_{3,t})=0
\end{equation}
\begin{equation}\label{eqa21}
    s_{4,t}(1 - r_2 s_{3,t})=0.
\end{equation}
Now, since $r_2\neq 0$, it follows that Equation~\eqref{eqa20} implies $s_{1,t} = 0$ or $s_{4,t} = 0$. Accordingly, we distinguish the following three cases.
\begin{enumerate}
    \item If $s_{1,t} = s_{4,t} = 0$, then all equations \eqref{eqa20}--\eqref{eqa21} are automatically satisfied, and no additional conditions are imposed on $s_{2,t}$ and $s_{3,t}$. In this case, $\omega$ is equivalent to $\omega_1$.

    \item If $s_{1,t} = 0$ and $s_{4,t} \neq 0$, then from Equations ~\eqref{eqa21} we obtain
    $s_{3,t} = \dfrac{1}{r_2}$.
    In this case, $\omega$ is equivalent to $\omega_2$.

    \item If $s_{1,t} \neq 0$ and $s_{4,t} = 0$, then using Equation ~\eqref{eqa19} we similarly deduce
    $
    s_{3,t} = \dfrac{1}{r_2}$.
    In this case, $\omega$ is equivalent to $\omega_3$.
\end{enumerate}
Therefore, $\omega$ is equivalent to one of the three representations, $\omega_j$ for $1\leq j \leq 3$, as required. 
\end{proof}

We now investigate the irreducibility of the representations $\omega_j$, for $1 \leq j \leq 3$, introduced in Theorem~\ref{thm2localUWn}. In order to apply the result we already have in Theorem~\ref{irred2loc}, it is necessary to determine the equivalent forms for $w_j$ for each $1 \leq j \leq 3$ where the matrices $\omega_j(\rho_i)$ have the same form as the matrices $\upsilon'(\rho_i)$ in Theorem \ref{irred2loc}. The following lemma addresses this point.

\begin{lemma} \label{lemmmmmma}
The representations $\omega_j$ for $1 \leq j \leq 3$ admit equivalent forms $\omega_j'$ for $1 \leq j \leq 3$, respectively, where the representations $\omega_j'$ are defined as follows.
\begin{enumerate}
\item  The matrices of the representation $\omega_1'$ are given by
    \[
\omega'_1(\rho_i)=
\begin{pmatrix}
I_{i-1} & 0 & 0 \\
0 & \begin{pmatrix} 0 & 1 \\ 1 & 0 \end{pmatrix} & 0 \\
0 & 0 & I_{n-i-1}
\end{pmatrix}
\text{ \ and \ }
\omega'_1(\sigma_{i,t})=
\begin{pmatrix}
I_{i-1} & 0 & 0 \\
0 & \begin{pmatrix} 0 & \dfrac{s_{2,t}}{r_2} \\ 
r_2 s_{3,t} & 0\end{pmatrix} & 0 \\
0 & 0 & I_{n-i-1}
\end{pmatrix}
\]
for all $1 \leq i \leq n-1$ and $1 \leq t \leq c$, where $r_2, s_{2,t}, s_{3,t} \in \mathbb{C}^*$.\vspace{0.1cm}

\item  The matrices of the representation $\omega'_2$ are given by
    \[
\omega_2'(\rho_i)=
\begin{pmatrix}
I_{i-1} & 0 & 0 \\
0 & \begin{pmatrix} 0 & 1 \\ 1 & 0 \end{pmatrix} & 0 \\
0 & 0 & I_{n-i-1}
\end{pmatrix}
\text{ \ and \ }
\omega_2'(\sigma_{i,t})=
\begin{pmatrix}
I_{i-1} & 0 & 0 \\
0 & \begin{pmatrix} 0 & \dfrac{s_{2,t}}{r_2} \\ 
1 & s_{4,t}\end{pmatrix} & 0 \\
0 & 0 & I_{n-i-1}
\end{pmatrix}
\]
for all $1 \leq i \leq n-1$ and $1 \leq t \leq c$, where $s_{2,t}, s_{4,t} \in \mathbb{C}^*$.\vspace{0.1cm}

\item  The matrices of the representation $\omega_3'$ are given by
    \[
\omega_3'(\rho_i)=
\begin{pmatrix}
I_{i-1} & 0 & 0 \\
0 & \begin{pmatrix} 0 & 1 \\ 1 & 0 \end{pmatrix} & 0 \\
0 & 0 & I_{n-i-1}
\end{pmatrix}
\text{ \ and \ }
\omega_3'(\sigma_{i,t})=
\begin{pmatrix}
I_{i-1} & 0 & 0 \\
0 & \begin{pmatrix} s_{1,t} & \dfrac{s_{2,t}}{r_2} \\ 
1 & 0\end{pmatrix} & 0 \\
0 & 0 & I_{n-i-1}
\end{pmatrix}
\]
for all $1 \leq i \leq n-1$ and $1 \leq t \leq c$, where $s_{1,t}, s_{2,t} \in \mathbb{C}^*$.
\end{enumerate}
\end{lemma}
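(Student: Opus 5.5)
The plan is to reuse the diagonal change of basis from the proof of Lemma~\ref{keylemma1}. Set
\[
Q = \mathrm{diag}\left(1, r_2^{-1}, r_2^{-2}, \ldots, r_2^{-n+1}\right),
\]
and define $\omega_j'(g) = Q^{-1}\omega_j(g)Q$ for every generator $g$ of $UW_n(c)$ and each $1\leq j\leq 3$. Conjugation by a fixed invertible matrix gives an equivalent representation, and it automatically preserves all defining relations, including the welded relations. So it remains only to compute the images of the generators $\rho_i$ and $\sigma_{i,t}$ explicitly.

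The key step is a local computation. Since $Q$ is diagonal, conjugating a matrix of the form $\mathrm{diag}(I_{i-1}, M, I_{n-i-1})$ leaves the identity blocks unchanged. It acts on the $2\times 2$ block $M$ in rows and columns $i,i+1$ through the restriction of $Q$ there, namely $\mathrm{diag}(q, q r_2^{-1})$ with $q = r_2^{-(i-1)}$. For $M=\begin{pmatrix} a & b\\ c & d\end{pmatrix}$, a direct computation gives
\[
\begin{pmatrix} q^{-1} & 0\\ 0 & r_2 q^{-1}\end{pmatrix}\begin{pmatrix} a & b\\ c & d\end{pmatrix}\begin{pmatrix} q & 0\\ 0 & q r_2^{-1}\end{pmatrix}=\begin{pmatrix} a & b/r_2\\ r_2 c & d\end{pmatrix}.
\]
This is independent of $i$, so homogeneity is preserved.

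Applying this formula to each generator finishes the argument.

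\begin{itemize}
\item For $\rho_i$, with $a=d=0$, $b=r_2$ and $c=1/r_2$, the block becomes $\begin{pmatrix}0&1\\1&0\end{pmatrix}$ in all three cases, which is exactly the block of $\upsilon'(\rho_i)$.
\item For $\omega_1(\sigma_{i,t})$, the block becomes $\begin{pmatrix}0 & s_{2,t}/r_2\\ r_2 s_{3,t} & 0\end{pmatrix}$.
\item For $\omega_2$ and $\omega_3$, the $(2,1)$ entry is $1/r_2$, so it becomes $r_2\cdot r_2^{-1}=1$. The $(1,2)$ entry becomes $s_{2,t}/r_2$, and the diagonal entries $s_{4,t}$, respectively $s_{1,t}$, are unchanged.
\end{itemize}

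These are precisely the matrices listed for $\omega_1'$, $\omega_2'$ and $\omega_3'$. The nonvanishing conditions carry over because $r_2\neq 0$.

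There is no real obstacle here. The only point requiring care is bookkeeping: checking that the scaling factor is the same ratio $r_2^{-1}$ between consecutive diagonal entries of $Q$ for every $i$, so that the conjugated blocks are identical for all $1\leq i\leq n-1$.
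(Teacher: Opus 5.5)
Your proposal is correct and follows the paper's own route: both conjugate each $\omega_j$ by $Q=\mathrm{diag}(1,r_2^{-1},\ldots,r_2^{-n+1})$. Your block formula $\begin{pmatrix} a & b\\ c & d\end{pmatrix}\mapsto\begin{pmatrix} a & b/r_2\\ r_2c & d\end{pmatrix}$, which does not depend on $i$, just writes out the ``direct computation'' that the paper leaves to the reader.
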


\begin{proof}
Similarly to the argument used in the proof of Lemma \ref{keylemma1}, we introduce the diagonal invertible matrix
\[
Q = \mathrm{diag}\left(1, r_2^{-1}, r_2^{-2}, \ldots, r_2^{-n+1}\right).
\]
A direct computation then shows that
\[
\omega'_j(\rho_i) = Q^{-1}\,\omega_j(\rho_i)\,Q \quad \text{and} \quad \omega'_j(\sigma_{i,t}) = Q^{-1}\,\omega_j(\sigma_{i,t})\,Q,
\]
for all \(1 \leq j \leq 3\), \(1 \leq i \leq n-1\), and \(1 \leq t \leq 2\). Consequently, each \(\omega'_j\) is equivalent to \(\omega_j\), as required.
\end{proof}

\begin{proposition}\label{prop:w2red}
Consider the representations $\omega'_j$ for $1\leq j \leq 3$ introduced in Lemma \ref{lemmmmmma}.
\begin{enumerate}
    \item The representation $\omega'_1$ is reducible if and only if $s_{2,t}=r_2$ and $s_{3,t}=\dfrac{1}{r_2}$.
    \item The representation $\omega'_2$ is reducible if and only if $\dfrac{s_{2,t}}{r_2}+s_{4,t}=1$.
    \item The representation $\omega'_3$ is reducible if and only if $s_{1,t}+\dfrac{s_{2,t}}{r_2}=1$.
\end{enumerate}
\end{proposition}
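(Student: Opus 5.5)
The plan is to reduce everything to Theorem~\ref{irred2loc}. Each $\omega'_j$ is a homogeneous $2$-local representation whose $\rho_i$-matrices are exactly those of $\upsilon'$ in Lemma~\ref{keylemma1}. It is therefore a special case of $\upsilon'$: the block of $\omega'_j(\sigma_{i,t})$ plays the role of the block $\begin{pmatrix} s_{1,t} & s_{2,t}\\ s_{3,t} & s_{4,t}\end{pmatrix}$ of $\upsilon'(\sigma_{i,t})$. Since the proof of Theorem~\ref{irred2loc} only used invariance under these matrices (through Lemma~\ref{keylemma2}), it applies verbatim. So $\omega'_j$ is reducible if and only if, for all $t$, either the row sums of its $\sigma$-block both equal $1$ (the vector $(1,\dots,1)^T$ is invariant), or the column sums both equal $1$ (the row vector $(1,\dots,1)$ is invariant from the left).

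The key steps are then just substitution, done case by case.

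For $\omega'_1$, the block is $\begin{pmatrix} 0 & s_{2,t}/r_2\\ r_2s_{3,t} & 0\end{pmatrix}$. Both the row-sum condition and the column-sum condition read $s_{2,t}/r_2=1$ and $r_2 s_{3,t}=1$, so they coincide. This gives $s_{2,t}=r_2$ and $s_{3,t}=1/r_2$.

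For $\omega'_2$, the block is $\begin{pmatrix} 0 & s_{2,t}/r_2\\ 1 & s_{4,t}\end{pmatrix}$. The row-sum condition is $s_{2,t}/r_2=1$ and $1+s_{4,t}=1$. The second equation forces $s_{4,t}=0$, which the statement excludes since $s_{4,t}\in\mathbb{C}^*$, so this alternative is impossible. The column-sum condition is $0+1=1$, automatically true, together with $s_{2,t}/r_2+s_{4,t}=1$. That is exactly the stated criterion.

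For $\omega'_3$, the block is $\begin{pmatrix} s_{1,t} & s_{2,t}/r_2\\ 1 & 0\end{pmatrix}$. The row-sum condition is $s_{1,t}+s_{2,t}/r_2=1$, with the second row sum equal to $1$ automatically. The column-sum condition would require $s_{1,t}+1=1$, i.e. $s_{1,t}=0$, which is excluded. So reducibility is equivalent to $s_{1,t}+s_{2,t}/r_2=1$.

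The conditions are to be read as holding for all $1\le t\le c$, as in Theorem~\ref{irred2loc}.

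The only point needing care is justifying that Theorem~\ref{irred2loc} transfers. One could object that $\omega'_j$ is a representation of $UW_n(c)$ rather than $UV_n(c)$. However, the proof of Theorem~\ref{irred2loc} is purely linear-algebraic: it uses Lemma~\ref{keylemma2} for the $\rho$-matrices and then explicit vector computations with $\sigma_{1,t}$ and $\sigma_{2,t}$. Both the necessity and sufficiency arguments therefore hold for any family of such matrices, and I would state this explicitly. I would also remark that the excluded alternatives (forcing $s_{4,t}=0$ or $s_{1,t}=0$) are ruled out by the nonvanishing hypotheses inherited from Theorem~\ref{thm2localUWn}.
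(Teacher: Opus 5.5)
Your proposal is correct and follows the paper's route. You apply Theorem~\ref{irred2loc} (via Lemma~\ref{keylemma2}) to the specialized $\sigma$-blocks, then rule out the row-sum alternative for $\omega'_2$ using $s_{4,t}\neq 0$ and the column-sum alternative for $\omega'_3$ using $s_{1,t}\neq 0$. Your transfer justification is also fine, and it could be shortened: $\omega'_j$ composed with the surjection $UV_n(c)\to UW_n(c)$ is a representation of the form $\upsilon'$ with exactly the same invariant subspaces.
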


\begin{proof}
We observe that the representations \(\omega_j\), for \(1 \leq j \leq 3\), are of a special form of the general structure described in Lemma \ref{keylemma2}. Therefore, by applying the results of this lemma together with Theorem \ref{irred2loc}, and taking into consideration that $s_{4,t}\neq 0$ in $\omega_2'$ and $s_{1,t}\neq 0$ in $\omega_3'$, we obtain the desired result.
\end{proof}

\end{document}